\DeclareMathOperator*{\argmin}{arg\,min}
\DeclareMathOperator{\proj}{\mathbf{Proj}}
\newcommand{\x}{{\bm{x}}}
\newcommand{\z}{{\bm{z}}}
\newcommand{\p}{{\bm{p}}}
\newcommand{\uu}{{\bm{u}}}
\newcommand{\mbx}{\mathbf{x}}
\newcommand{\mbz}{\mathbf{z}}
\newcommand{\mbf}{\mathbf{f}}
\newcommand{\mbT}{\mathbf{T}}
\newcommand{\cA}{\mathcal{A}}
\newcommand{\cB}{\mathcal{B}}
\newcommand{\cH}{\mathcal{H}}
\newcommand{\cO}{\mathcal{O}}
\newcommand{\cR}{\mathcal{R}}
\newcommand{\cT}{\mathcal{T}}
\newcommand{\cU}{\mathcal{U}}
\newcommand{\bR}{\mathbb{R}}
\newcommand{\dist}{{\mathrm{dist}}}
\DeclareSymbolFont{symbolsC}{U}{pxsyc}{m}{n}
\newtheorem{remark}{Remark}
\newtheorem{proposition}{Proposition}
\newtheorem{corollary}{Corollary}
\newtheorem{theorem}{Theorem}
\newtheorem{example}{Example}
\newtheorem{lemma}{Lemma}
\def\BibTeX{{\rm B\kern-.05em{\sc i\kern-.025em b}\kern-.08em
    T\kern-.1667em\lower.7ex\hbox{E}\kern-.125emX}}
\title{\textbf{Guarding a Target Set from a Single Attacker in the Euclidean Space}}
\author{Yoonjae Lee \and Efstathios Bakolas \thanks{Y. Lee (graduate student) and E. Bakolas (Associate Professor) are with the Department of Aerospace Engineering
and Engineering Mechanics, The University of Texas at Austin,
Austin, Texas 78712-1221, USA. Emails: yol033@utexas.edu; bakolas@austin.utexas.edu}}
\begin{document}

\maketitle

\begin{abstract}
This paper addresses a two-player target defense game in the $n$-dimensional Euclidean space where an attacker attempts to enter a closed convex target set while a defender strives to capture the attacker beforehand. We provide a complete and universal differential game-based solution which not only encompasses recent work associated with similar problems whose target sets have simple, low-dimensional geometric shapes,
but can also address problems that involve nontrivial geometric shapes of high-dimensional target sets. The value functions of the game are derived in a semi-analytical form that includes a convex optimization problem. When the latter problem has a closed-form solution, one of the value functions is used to analytically construct the barrier surface that divides the state space of the game into the winning sets of players. For the case where the barrier surface has no analytical expression but the target set has a smooth boundary, the bijective map between the target boundary and the projection of the barrier surface is obtained. By using Hamilton-Jacobi-Isaacs equation, we verify that the proposed optimal state feedback strategies always constitute the game's unique saddle point whether or not the optimization problem has a closed-form solution. We illustrate our solutions via numerical simulations.
\end{abstract}

\section{Introduction}

The problem of guarding a target has drawn a lot of attention due to its relevance to a wide spectrum of applications in aerospace, military, and robotics. Differential game theory, founded by Isaacs in \cite{isaacs1965differential}, provides a rigorous framework to analyze this type of problems by treating adversarial interactions between agents as a dynamic game subject to their kinodynamic constraints \cite{bacsar1998dynamic}. In his pioneering work \cite{isaacs1965differential}, Isaacs addresses a two-player differential game of guarding a planar target area and suggests a geometric method with which one can determine the possibility of capture of the attacker and find optimal guarding/attacking strategies when both players have simple motion and the same speed.

Target defense (or reach-avoid) differential games have been explored extensively, often with a point target (singleton) \cite{li2011defending,garcia2020optimal,selvakumar2019feedback}.
In \cite{li2011defending}, Li \textit{et al.} study a two-player planar game of guarding a point target or asset using the framework of linear quadratic differential games. The saddle-point solution of a two-defender single-attacker reach-avoid game with a point target in three-dimensional space has been studied by Garcia \textit{et al.} \cite{garcia2020optimal}. In \cite{selvakumar2019feedback}, Selvakumar \textit{et al.} develop a feedback strategy for a single attacker to reach a point target against a team of distributed defenders employing the relay pursuit strategy~\cite{bakolas2012relay}.

In practice, a target is often better described by a set which is not a singleton, such as a border line \cite{yan2018reach,yan2019task,garcia2020multiple,yan2020guarding} or a closed (or compact) set
\cite{yan2017defense,garcia2019optimal,pachter2017differential,von2020multiple,shishika2020cooperative}. In \cite{yan2018reach,yan2019task,garcia2020multiple}, the authors provide analytical solutions to multiplayer border defense games. Guarding a subspace in the $n$-dimensional space has been recently studied by Yan \textit{et al.} \cite{yan2020guarding}. In \cite{yan2017defense}, the same authors address a two-player perimeter defense game, which was later revisited by Garcia \textit{et al.} \cite{garcia2019optimal}. In \cite{pachter2017differential} and \cite{von2020multiple}, the authors apply Isaacs' method to a polygonal target. 
In \cite{shishika2020cooperative}, Shishika \textit{et al.} discuss a class of perimeter defense games in which the motion of defenders is constrained along the perimeter of a planar target area.
Experimental implementation of target defense games is presented in~\cite{mohanan2018toward} and \cite{fu2020}.

Although all of the aforementioned previous work offer novel solutions to certain classes of target defense games, these solutions in general are limited to problems in which the target set has a specific type of geometry. The motivation of this paper is to formulate a global, standalone solution model that is applicable to a broader pool of target defense games. The solution proposed in this paper, mainly focused on the Two-Player Target Defense Game (TPTDG) in the $n$-th dimensional space, is designed to not only solely produce the same results (i.e., barrier and saddle point strategies) as some of the previous work but also provide answers to more challenging, unexplored problems in which an arbitrary shape of high-dimensional convex target is to be guarded.

Our approach follows the rigorous differential game-based solution procedure adopted in, for example, \cite{garcia2020optimal} in that the Game of Kind and the Game of Degree \cite{isaacs1965differential} of the TPTDG are sequentially addressed. Taking over unresolved problems from our previous work \cite{lee2021optimal}, this paper provides an explicit statement about when the proposed semi-analytical barrier function and optimal guarding/attacking strategies admit closed-form expressions, which draws a direct connection with convex optimization. We also demonstrate how to apply our solution to a few special examples of TPTDG in which the differential game-based solutions are in closed-form as in, for instance, \cite{garcia2020optimal,yan2020guarding}, and \cite{pachter2017differential}, thereby verifying the universality of our solution. Furthermore, the bijective map between the target boundary and the projection of the barrier surface is derived to address non-trivial cases for which no analytical solution exists. The proposed optimal strategies are verified by Isaacs' method, namely the Hamilton-Jacobi-Isaacs (HJI) equation, to always correspond to the saddle point solution of the game, provided a convex target set.

The rest of the paper is structured as follows. In Section~\ref{sec:problem_formulation}, the $n$-dimensional TPTDG is formulated. In Section~\ref{sec:gameofkind}, the barrier function is defined and then the barrier surface and the winning sets of players are identified. In Section~\ref{sec:gamesofdegree}, optimal guarding/attacking strategies are characterized. In Section~\ref{sec:simulations}, illustrative numerical simulations are presented. Finally, concluding remarks are presented in Section~\ref{sec:conclusion}.

\section{Problem Formulation} \label{sec:problem_formulation}

\subsection{Notation}
The symbols $\mathbb{N}$, $\bR$, $\bR_{\geq 0}$, and $\bR_{>0}$ denote the set of natural numbers (positive integers), real numbers, nonnegative real numbers, and positive real numbers, respectively. The operator $\langle \cdot,\cdot \rangle$ (resp., $\|\cdot\|$) denotes the inner product (resp., $\ell^2$-norm) in the Euclidean space $\bR^n$ (where $n \in \mathbb{N}$). Given a point $\x \in \bR^n$ and a closed set $\cO \subset \bR^n$, we define the (set) distance function $\dist : \bR^n \times \cO \rightarrow \bR_{\geq 0}$ that measures the closeness of $\x$ from $\cO$ by $\mathrm{dist}(\x, \cO) := \min_{\z \in \cO}\|\x - \z\|$. If $\cO$ is closed and convex, the projection operator $\proj_\cO : \bR^n \rightarrow \cO$ which we define by $\proj_{\cO}(\x) := \argmin\nolimits_{\z \in \cO} \| \x - \z \|$ computes the (unique) orthogonal projection of $\x$ onto $\cO$.

\subsection{Target-Defense Differential Game in $\bR^n$}
The TPTDG between a \textit{defender} or \textit{pursuer} ($P$) and an \textit{attacker} or \textit{evader} ($E$) is considered. The game space is defined by the $n$-dimensional Euclidean space $\bR^n$ where $n \in \mathbb{N}$. Let $\Omega$ denote a nonempty, closed and convex subset of $\bR^n$, referred to as the Target Set (TS), which $E$ attempts to attack while $P$ strives to guard. TS is assumed to be permeable for $P$, i.e., $P$ can move in and out of TS freely. At every time instant, both players have complete information about each other's state. The kinematics of the agents are given by
\begin{equation} \label{eq:motion}
    \begin{aligned}
       \dot{\x}_{P}  &= v_{P} \uu_{P}, \qquad &\x_{P}(0) & =\x_{P}^0,
        \\
        \dot{\x}_E  &= v_E \uu_E,  &\x_E(0) & =\x_{E}^0,
    \end{aligned}
\end{equation}
where $\x_P \in \bR^n$ and $\x_E \in \bR^n$ (resp., $\x_{P}^0 \in \bR^n$ and $\x_{E}^0 \in \bR^n$) denote the position of $P$ and $E$ at time $t$ (resp., at time $t=0$), respectively. Similarly, $\uu_P \in \cU$ and $\uu_E \in \cU$ (resp., $v_P \in \bR_{>0}$ and $v_E \in \bR_{>0}$) denote the control input (resp., maximum allowable speed) of $P$ and $E$, respectively, where $\cU := \{ \uu \in \bR^n : \| \uu \| = 1 \}$. Let $\gamma := v_E/v_P$ (speed ratio), then throughout the paper it is assumed that $0 < \gamma < 1$ (i.e., $E$ is slower than $P$). Denote the game state by $\mbx = (\x_P, \x_E) \in \mathbb{R}^{2n}$, then the game dynamics can be written as
\begin{equation}\label{eq:statedyns}
\dot{\mbx} = \mbf(\mbx,\uu_P,\uu_E),~~~~\mbx(0)=\mbx_0,
\end{equation}
where $\mbx_0 = (\x_{P}^0, \x_{E}^0) \in \bR^{2n}$ is the initial state of the game and $\mbf: \bR^{2n} \times \cU \times \cU \rightarrow \bR^{2n}$ is the vector field of the game dynamics, where $\mbf(\mbx,\uu_P,\uu_E) := (v_P \uu_P, v_E \uu_E)$.

The TPTDG terminates if either capture or attack occurs with zero proximity. The final time of the game is $t_f := \inf \{ t \in \bR_{>0}: (\mbx(t) \in \cT_c) \vee (\mbx(t) \in \cT_a) \}$, where the capture and attack terminal manifolds are defined by
\begin{align}
    \cT_c &:= \left\{ \mbz \in \bR^{2n}: \|\x_P - \x_E\| = 0 \right\}, \label{eq:captermanifold}
    \\
    \cT_a &:= \left\{ \mbz \in \bR^{2n} : \x_E \in \Omega \right\}. \label{eq:atttermanifold}
\end{align}
At $t_f$, $P$ (resp., $E$) wins the game if $\mbx(t_f) \in \cT_c$ (resp., if $\mbx(t_f) \in \cT_a$). As will be discussed in the following section, the state space of the game, $\bR^{2n}$, can be divided by the barrier surface $\cB$ into two disjoint sets $\cR_c$ and $\cR_a$ in each of which a different local game (or subgame) is played. In particular, if $\mbx_0 \in \cR_c$, capture of $E$ is ensured under optimal play, so the objective of $P$ (resp., $E$) is to maximize (resp., minimize) the minimum distance between the point of capture and TS, which induces a subgame called the capture game whose payoff functional is defined by
\begin{equation}\label{eq:payofffun1}
    J_c \left( \uu_P(\cdot),\uu_E(\cdot) \right) := \dist ( \x_E(t_f),\Omega ),
\end{equation}
where $\uu_P(\cdot)$ and $\uu_E(\cdot)$ denote the  state feedback strategies of $P$ and $E$. The value function of this game is
\begin{equation} \label{eq:valfun1}
    V_{c}(\mbx_0) := \min\nolimits_{\uu_E(\cdot)} \max\nolimits_{\uu_P(\cdot)} J_c,
\end{equation}
subject to \eqref{eq:statedyns} and \eqref{eq:captermanifold}. If $\mbx_0 \in \cR_a$, on the other hand, capture is not possible, so the objective of $P$ (resp., $E$) is to minimize (resp., maximize) the distance between $P$'s final position and the point of attack. This subgame is referred to as the attack game and its payoff functional is defined by
\begin{equation} \label{eq:payofffun2}
    J_a( \uu_P(\cdot),\uu_E(\cdot) ) := \left\| \x_E(t_f) - \x_P(t_f) \right\|.
\end{equation}
The value function of the attack game is
\begin{equation}\label{eq:valfun2}
    V_a(\mbx_0) := \min\nolimits_{\uu_P(\cdot)} \max\nolimits_{\uu_E(\cdot)} J_a,
\end{equation}
subject to \eqref{eq:statedyns} and \eqref{eq:atttermanifold}.

\section{Game of Kind} \label{sec:gameofkind}

Addressed in this section is the Game of Kind in TPTDG. Specifically, the barrier surface of the game, which demarcates the availability of capture and divides the state space of the game into the winning sets of $P$ and $E$, is identified.

\subsection{Isaacs' Geometric Method and Barrier Function}

\begin{proposition}\label{prop:hamiltonian}
    The saddle-point strategies (in open-loop form) of the TPTDG defined in Section~\ref{sec:problem_formulation} correspond to constant inputs over time and their corresponding optimal trajectories are straight lines.
\end{proposition}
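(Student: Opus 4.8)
The plan is to apply the standard necessary conditions for an open-loop saddle point of a zero-sum differential game (Isaacs' conditions, equivalently Pontryagin's principle applied to the upper and lower value) and to exploit two structural features of the TPTDG: the vector field $\mbf$ in \eqref{eq:statedyns} does not depend on the state $\mbx$, and the payoffs \eqref{eq:payofffun1}, \eqref{eq:payofffun2} contain no running cost. Introduce the costate $\bm{\lambda}=(\bm{\lambda}_P,\bm{\lambda}_E)\in\bR^{2n}$ and the game Hamiltonian $\mathcal{H}(\mbx,\bm{\lambda},\uu_P,\uu_E)=\langle\bm{\lambda}_P, v_P\uu_P\rangle+\langle\bm{\lambda}_E, v_E\uu_E\rangle$. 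Because $\mathcal{H}$ is separable in $\uu_P$ and $\uu_E$, Isaacs' condition $\min_{\uu_E}\max_{\uu_P}\mathcal{H}=\max_{\uu_P}\min_{\uu_E}\mathcal{H}$ holds trivially, so the saddle-point analysis is well posed for both the capture game and the attack game.

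First I would write the adjoint equations. Since neither $\mbf$ nor the (absent) running cost depends on $\mbx$, one gets $\dot{\bm{\lambda}}_P=-\nabla_{\x_P}\mathcal{H}=\bm{0}$ and $\dot{\bm{\lambda}}_E=-\nabla_{\x_E}\mathcal{H}=\bm{0}$, hence $\bm{\lambda}_P$ and $\bm{\lambda}_E$ are constant along any saddle-point trajectory. Next, the pointwise (Isaacs) optimality conditions give $\uu_P(t)\in\argmax_{\|\uu\|=1}\langle\bm{\lambda}_P,\uu\rangle$ and $\uu_E(t)\in\argmin_{\|\uu\|=1}\langle\bm{\lambda}_E,\uu\rangle$; whenever $\bm{\lambda}_P\neq\bm{0}$ (resp.\ $\bm{\lambda}_E\neq\bm{0}$) the maximizer $\uu_P^\star=\bm{\lambda}_P/\|\bm{\lambda}_P\|$ (resp.\ the minimizer $\uu_E^\star=-\bm{\lambda}_E/\|\bm{\lambda}_E\|$) is the unique optimal input, and, being a function of a constant costate, it is itself constant in time. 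Substituting the constant inputs into \eqref{eq:motion} and integrating yields $\x_P(t)=\x_P^0+v_Pt\,\uu_P^\star$ and $\x_E(t)=\x_E^0+v_Et\,\uu_E^\star$, i.e., straight-line optimal trajectories, which is the assertion.

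The point that needs care --- and the main obstacle --- is excluding the degenerate situation in which a costate vanishes so that the corresponding optimal input is left undetermined. Here the transversality conditions at $t_f$ on the terminal manifolds do the work: on $\cT_c$ one has $\x_P(t_f)=\x_E(t_f)$ and, as long as the capture point lies strictly outside $\Omega$, the gradient of $J_c$ in $\x_E(t_f)$ is the unit vector $\bm{n}=(\x_E(t_f)-\proj_\Omega(\x_E(t_f)))/\dist(\x_E(t_f),\Omega)$, so transversality combined with the normal-cone structure of $\cT_c$ forces $\bm{\lambda}_P,\bm{\lambda}_E$ to be nonzero (indeed collinear with $\bm{n}$); on $\cT_a$ one has $\x_E(t_f)\in\partial\Omega$ and the gradient of $J_a$ in $\x_P(t_f)$ is $-(\x_E(t_f)-\x_P(t_f))/\|\x_E(t_f)-\x_P(t_f)\|$, which is nonzero precisely because the game has not terminated by capture. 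Carrying out this bookkeeping on both manifolds --- together with the free-final-time condition $\mathcal{H}\equiv 0$, which additionally pins down the ratio $\|\bm{\lambda}_P\|/\|\bm{\lambda}_E\|=\gamma$ --- confirms that the costates are nonzero constants on the relevant portions $\cR_c$ and $\cR_a$ of the state space, and the remaining knife-edge cases (evader already inside $\Omega$, or players coincident) lie outside $\cR_c\cup\cR_a$ or are vacuously consistent with the claim. Everything else is immediate from the integrator structure of \eqref{eq:motion}.
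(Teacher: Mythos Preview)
Your proof is correct and follows essentially the same approach as the paper: form the Hamiltonian, use separability to verify Isaacs' condition, use the Mayer structure (state-independent $\mbf$, no running cost) to conclude $\dot{\bm{\lambda}}_P=\dot{\bm{\lambda}}_E=\bm{0}$, and hence obtain constant optimal inputs and straight-line trajectories. You are in fact more careful than the paper, which does not explicitly discuss the degenerate case $\bm{\lambda}_P=\bm{0}$ or $\bm{\lambda}_E=\bm{0}$; your use of the transversality conditions on $\cT_c$ and $\cT_a$ (and the free-final-time condition) to rule this out is a welcome addition.
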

\begin{proof}
    The Hamiltonian of either subgame is given by $H = v_P \bm{\lambda}_P^\top \uu_P + v_E \bm{\lambda}_E^\top \uu_E$,
    where $\bm{\lambda}_P \in \bR^n$ and $\bm{\lambda}_E \in \bR^n$ are the co-state vectors. Since payoff functionals~\eqref{eq:payofffun1} and \eqref{eq:payofffun2} are both Mayer-type, $\dot{\bm{\lambda}}_P = \bm{0}$ and $\dot{\bm{\lambda}}_E = \bm{0}$, which implies that $\bm{\lambda}_P$ and $\bm{\lambda}_E$ are constant. Since $H$ is separable in $\uu_P$ and $\uu_E$, Isaacs' condition holds, and thus Pontryagin's (min-max) principle can be applied to derive optimal strategies in open-loop form \cite{bacsar1998dynamic}, which correspond to constant inputs in this case. Consequently, the corresponding optimal trajectories are straight lines and the proof is complete.
\end{proof}

Since the optimal trajectories of both players are straight lines, we can utilize Isaacs' geometric method \cite{isaacs1965differential}. 

\begin{lemma} \label{lemma:apolsphere}
    Given the TPTDG defined in Section~\ref{sec:problem_formulation}, let the Safe Region (SR) and the Boundary of the Safe Region (BSR) of $E$, which respectively refer to the interior and boundary of the set of all points in $\bR^n$ that $E$ can reach without being captured by $P$, be defined by
    \begin{align}
        \cA(\mbx;\gamma) &:= \left\{ \z \in \bR^n : \|\z-\bm{\alpha}(\mbx;\gamma)\| < \beta(\mbx;\gamma) \right\},
        \\
        \partial \cA(\mbx;\gamma) &:= \left\{ \z \in \bR^n : \|\z-\bm{\alpha}(\mbx;\gamma)\| = \beta(\mbx;\gamma) \right\},
    \end{align}
    where
    \begin{align}
        \bm{\alpha}(\mbx;\gamma) = \frac{\x_{E} - \gamma^2 \x_{P}}{1-\gamma^2},~\beta(\mbx;\gamma) = \frac{\gamma \left\| \x_{E} - \x_{P} \right\|}{1-\gamma^2}. \label{eq:apolcenterandradius}
    \end{align}
    Then, $P$ can win the game if $\Omega \cap \cA(\mbx_0;\gamma) = \varnothing$ whereas $E$ can win the game if $\Omega \cap \cA(\mbx_0;\gamma) \neq \varnothing$ (under optimal play).
\end{lemma}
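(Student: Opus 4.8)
The plan is to record first the geometric meaning of $\cA(\mbx;\gamma)$ and then establish the two implications separately. Completing the square in $\|\z-\x_E\|^2<\gamma^2\|\z-\x_P\|^2$ shows that $\cA(\mbx;\gamma)=\{\z\in\bR^n:\|\z-\x_E\|<\gamma\|\z-\x_P\|\}$; that is, $\cA(\mbx;\gamma)$ is exactly the set of points that $E$ (moving on a straight line at speed $v_E$) can reach strictly sooner than $P$ (speed $v_P=v_E/\gamma$) possibly can along any path. I will use repeatedly that $\cA(\mbx;\gamma)$ is an open Euclidean ball, hence convex, and that $\x_E\in\cA(\mbx;\gamma)$ whenever $\x_E\neq\x_P$, since $\|\x_E-\bm{\alpha}(\mbx;\gamma)\|=\gamma\,\beta(\mbx;\gamma)<\beta(\mbx;\gamma)$.

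For ``$E$ can win if $\Omega\cap\cA(\mbx_0;\gamma)\neq\varnothing$'', fix $\z^*\in\Omega\cap\cA(\mbx_0;\gamma)$ and let $E$ run straight toward $\z^*$ at full speed. The whole segment $[\x_E^0,\z^*]$ lies in the convex set $\cA(\mbx_0;\gamma)$. If $P$ captured $E$ at a time $\tau>0$ strictly before $E$ reaches $\z^*$, the capture point $\z=\x_E(\tau)\in[\x_E^0,\z^*)$ would satisfy $\|\z-\x_E^0\|=v_E\tau$ and $\|\z-\x_P^0\|\le v_P\tau$, hence $\|\z-\x_E^0\|\ge\gamma\|\z-\x_P^0\|$, i.e.\ $\z\notin\cA(\mbx_0;\gamma)$ --- a contradiction. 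Thus $E$ reaches $\z^*\in\Omega$ (or enters $\Omega$ earlier en route) uncaptured, so $\mbx(t_f)\in\cT_a$ and $E$ wins.

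For the converse, assume $\Omega\cap\cA(\mbx_0;\gamma)=\varnothing$. Since $\x_E(t)\in\cA(\mbx(t);\gamma)$ until capture, it suffices to exhibit a strategy for $P$ that maintains the invariant $\Omega\cap\cA(\mbx(t);\gamma)=\varnothing$ up to a finite capture time, since this rules out attack. By Proposition~\ref{prop:hamiltonian} it is enough to analyze straight-line play. The hypothesis is equivalent to $\|\z-\x_E^0\|\ge\gamma\|\z-\x_P^0\|$ for every $\z\in\Omega$, i.e.\ $P$ can reach every point of $\Omega$ no later than $E$ can. Concretely, if $E$ heads straight toward its first contact point $\z_E$ with $\Omega$, its ray leaves the ball $\cA(\mbx_0;\gamma)$ (which contains $\x_E^0$) at a point $\bm{w}\in\partial\cA(\mbx_0;\gamma)$ that precedes $\z_E$ (because $\z_E\in\Omega$ is disjoint from $\cA(\mbx_0;\gamma)$); since $\|\bm{w}-\x_E^0\|=\gamma\|\bm{w}-\x_P^0\|$, the pursuer driving straight to $\bm{w}$ arrives no later than $E$ and intercepts it at or before $\bm{w}$, hence strictly before any attack. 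Promoting this to a state-feedback rule of $P$ that keeps $\dist\big(\cA(\mbx(t);\gamma),\Omega\big)\ge 0$ along \emph{every} admissible trajectory of $E$ yields $P$'s winning strategy.

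The main obstacle is exactly this last step: the pursuit above is tailored to a straight-line evasion, and $\cA(\mbx;\gamma)$ depends on both positions and can be pushed toward $\Omega$ by nonstraight maneuvers of $E$, so one must produce a \emph{single} feedback strategy of $P$ effective against all admissible $E$. I would handle it through the invariance argument just sketched --- differentiating $t\mapsto\dist\big(\cA(\mbx(t);\gamma),\Omega\big)$ and choosing $\uu_P(\mbx)$ to keep this quantity nonnegative while still forcing the relative distance to decrease --- and note that it is in any case certified a posteriori by the Hamilton--Jacobi--Isaacs analysis of Section~\ref{sec:gamesofdegree}, where $V_c$ and $V_a$ are shown to solve the HJI equation with the proposed feedback pair as the unique saddle point; that analysis in particular yields $V_c(\mbx_0)>0\iff\Omega\cap\cA(\mbx_0;\gamma)=\varnothing$, which is precisely the claimed dichotomy.
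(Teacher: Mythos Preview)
Your completing-the-square derivation of the Apollonius ball and your argument for the $E$-wins direction are exactly what the paper does, only spelled out more carefully; the paper simply invokes ``Isaacs' geometric method'' to equate the safe region with the Apollonius interior and then performs the same algebra to extract $\bm{\alpha}$ and $\beta$.

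The difference is in the $P$-wins direction. The paper does not argue it at all beyond the citation: once the safe region is \emph{defined} as the set of points $E$ can reach uncaptured and is identified (via Isaacs and Proposition~\ref{prop:hamiltonian}) with the Apollonius ball, disjointness from $\Omega$ tautologically means $E$ cannot reach $\Omega$ before capture. You instead try to construct $P$'s winning strategy from scratch, and you correctly flag the obstacle---your interception at $\bm{w}\in\partial\cA$ only covers straight-line evaders. Your proposed fix, appealing forward to the HJI analysis of Section~\ref{sec:gamesofdegree}, is circular as the paper is organized: Theorem~\ref{theo:barriersurface} invokes Lemma~\ref{lemma:apolsphere} to obtain the form of $V_c$, and Theorem~\ref{theo:capgamedegree} then verifies that form satisfies HJI. (One could in principle reorder the logic---verify directly that the candidate $V_c$ solves HJI and deduce Lemma~\ref{lemma:apolsphere} as a corollary---but neither you nor the paper actually does this.) If you want a self-contained $P$-wins argument, the invariance route you sketch is the right one, but it must be executed: exhibit a feedback $\uu_P(\mbx)$ and show that along any admissible $E$-trajectory it keeps $\dist(\bm{\alpha}(\mbx(t)),\Omega)-\beta(\mbx(t))\ge 0$ while driving $\|\x_P-\x_E\|\to 0$ in finite time, rather than deferring this to a later section.
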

\begin{proof}
    According to Isaacs' geometric method, the SR (resp., BSR) corresponds to the interior (resp., boundary) of the Apollonius circle associated with the positions of $P$ and $E$, which at $t=0$ is defined as the set of points that satisfy
    \begin{align}
        \left\|\z-\x_E^0\right\| = \gamma \left\|\z-\x_P^0\right\|, ~\z \in \bR^n.
    \end{align}
    Squaring both sides and rearranging terms gives
    \begin{align}\label{eq:proofeq1}
        \|\z\|^2 - \frac{2 \left\langle \z, \x_E^0 - \gamma^2 \x_P^0 \right\rangle}{1-\gamma^2} = \frac{\gamma^2\left( \|\x_P^0\|^2 - \|\x_E^0\|^2 \right)}{1-\gamma^2}.
    \end{align}
   By adding $\|\x_E^0-\gamma^2 \x_P^0\|^2/(1-\gamma^2)^2$ to both sides of \eqref{eq:proofeq1} and taking the square root, it follows readily that
    \begin{align}
        \left\| \z - \frac{\x_E^0 - \gamma^2 \x_P^0}{1-\gamma^2} \right\| = \frac{\gamma \left\|\x_E^0-\x_P^0\right\|}{1-\gamma^2},
    \end{align}
    which proves that $\bm{\alpha}(\mbx;\gamma)$ and $\beta(\mbx;\gamma)$ satsify Eq.~\eqref{eq:apolcenterandradius}. It then follows from the definitions of SR that $E$ can (resp., cannot) enter TS before being captured by $P$ if TS and SR intersect (resp., are disjoint).
\end{proof}

For notational brevity, the ratio $\gamma$ will be dropped from the arguments of $\bm{\alpha}$ and $\beta$ throughout the paper. Using the notions of SR and BSR, the barrier function and the winning sets of players \cite{garcia2020optimal} are characterized in the following theorem.

\begin{theorem} \label{theo:barriersurface}
    Given the TPTDG defined in Section~\ref{sec:problem_formulation}, the (semipermeable) barrier surface, $\cB$, and the winning sets of players separated by $\cB$, namely $\cR_c$ (where capture is ensured) and $\cR_a$ (where attack is ensured), are given by
    \begin{align}
        \cB &:= \left\{ \mbz \in \bR^{2n} : B(\mbz;\Omega,\gamma) = 0 \right\},
        \\
        \cR_c &:= \left\{ \mbz \in \bR^{2n} : B(\mbz;\Omega,\gamma) > 0 \right\}, \label{eq:Pwinningset}
        \\
        \cR_a &:= \left\{ \mbz \in \bR^{2n} : B(\mbz;\Omega,\gamma) < 0 \right\}, \label{eq:Ewinningset}
    \end{align}
    where the barrier function $B : \bR^{2n} \rightarrow \bR$ is defined by
    \begin{align} \label{eq:barrierfunction}
        &B(\mbx;\Omega,\gamma) := \left\|  \bm{\alpha}(\mbx)-\proj_\Omega (\bm{\alpha}(\mbx)) \right\| - \beta(\mbx).
    \end{align}
    Then, under optimal play, $P$ can win the game if $\mbx_0 \in \cR_c$, whereas $E$ can win the game if $\mbx_0 \in \cR_a$.
\end{theorem}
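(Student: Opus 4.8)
The plan is to reduce the statement to Lemma~\ref{lemma:apolsphere} by recognizing that the barrier function $B$ encodes precisely whether the intersection $\Omega \cap \cA(\mbx_0)$ is empty. First I would record that, since $\Omega$ is nonempty, closed and convex, $\proj_\Omega$ is well defined and $\| \bm{\alpha}(\mbx) - \proj_\Omega(\bm{\alpha}(\mbx)) \| = \dist(\bm{\alpha}(\mbx),\Omega)$; hence $B(\mbx;\Omega,\gamma) = \dist(\bm{\alpha}(\mbx),\Omega) - \beta(\mbx)$.

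Next I would observe that $\cA(\mbx;\gamma)$ is exactly the open Euclidean ball of radius $\beta(\mbx)$ centered at $\bm{\alpha}(\mbx)$. Consequently, $\Omega \cap \cA(\mbx;\gamma) \neq \varnothing$ holds iff there is some $\z \in \Omega$ with $\| \z - \bm{\alpha}(\mbx) \| < \beta(\mbx)$, which --- because the infimum defining $\dist(\bm{\alpha}(\mbx),\Omega)$ is attained on the closed set $\Omega$ --- is equivalent to $\dist(\bm{\alpha}(\mbx),\Omega) < \beta(\mbx)$, i.e.\ to $B(\mbx;\Omega,\gamma) < 0$. Dually, $\Omega \cap \cA(\mbx;\gamma) = \varnothing$ iff $\dist(\bm{\alpha}(\mbx),\Omega) \geq \beta(\mbx)$, i.e.\ iff $B(\mbx;\Omega,\gamma) \geq 0$.

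Combining these equivalences with Lemma~\ref{lemma:apolsphere} yields the claim: if $\mbx_0 \in \cR_c$, i.e.\ $B(\mbx_0;\Omega,\gamma) > 0$, then $\Omega \cap \cA(\mbx_0;\gamma) = \varnothing$ and $P$ wins under optimal play; if $\mbx_0 \in \cR_a$, i.e.\ $B(\mbx_0;\Omega,\gamma) < 0$, then $\Omega \cap \cA(\mbx_0;\gamma) \neq \varnothing$ and $E$ wins under optimal play. Continuity of $\bm{\alpha}$, $\beta$ and $\dist(\cdot,\Omega)$ shows that $B$ is continuous, so $\cB = \{ \mbz : B(\mbz;\Omega,\gamma) = 0 \}$ is the common boundary separating $\cR_c$ and $\cR_a$; the semipermeability assertion --- that no unilateral deviation of either player can carry the state across $\cB$ in the adverse direction --- is then inherited from the Isaacs/HJI analysis of Section~\ref{sec:gamesofdegree}, or can be read off directly from the supporting hyperplane of $\Omega$ at $\proj_\Omega(\bm{\alpha}(\mbx))$, which furnishes the semipermeable normal direction at each state of $\cB$.

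The main obstacle I anticipate is not the algebra but the bookkeeping at the boundary: $\cA$ is an \emph{open} ball whereas $\dist(\cdot,\Omega)$ is attained, so one must check that the degenerate case $B=0$ does not contaminate either winning-set claim --- it does not, since $\cR_c$ and $\cR_a$ are cut out by strict inequalities. If a self-contained proof of semipermeability is wanted here rather than deferred to Section~\ref{sec:gamesofdegree}, the delicate part is verifying the two one-sided reachability arguments on $\cB$, which is precisely where convexity of $\Omega$, through its supporting hyperplanes, is essential.
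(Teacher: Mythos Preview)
Your proposal is correct and follows essentially the same route as the paper: rewrite $B(\mbx;\Omega,\gamma)=\dist(\bm{\alpha}(\mbx),\Omega)-\beta(\mbx)$, use that $\cA$ is the open ball of radius $\beta$ about $\bm{\alpha}$ to identify the sign of $B$ with emptiness/nonemptiness of $\Omega\cap\cA$, and then invoke Lemma~\ref{lemma:apolsphere}. The only differences are cosmetic --- the paper phrases the same equivalence via $\proj_\Omega(\bm{\alpha})\in\cA$ rather than via $\dist$, and it does not separately argue semipermeability or the boundary case $B=0$, which you handle more carefully.
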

\begin{proof}
    The barrier function is an indicator-like function that attains a positive value if capture is possible and a negative value otherwise. Let us choose the value function~\eqref{eq:valfun1} as our barrier function (the reason will be clarified along the proof), then in view of Lemma~\ref{lemma:apolsphere} the barrier function, or equivalently Eq.~\eqref{eq:valfun1}, is simplified as
    \begin{align} \label{eq:Vc}
        B(\mbx_0;\Omega,\gamma) &= \min\nolimits_{\uu_E^\star(\cdot)}\max\nolimits_{\uu_P^\star(\cdot)} \dist(\x_E(t_f),\Omega) \nonumber
        \\
        &= \min\nolimits_{\z \in \Omega} \| \bm{\alpha}(\mbx_0)-\z \| - \beta(\mbx_0) \nonumber
        \\
        &= \| \bm{\alpha}(\mbx_0)-\proj_\Omega (\bm{\alpha}(\mbx_0)) \| - \beta(\mbx_0),
    \end{align}
    where the existence and uniqueness of the projection are ensured by the fact that TS is a closed and convex set. It then follows from Lemma~\ref{lemma:apolsphere} that $B(\mbx_0;\Omega,\gamma) < 0 \Leftrightarrow \proj_\Omega (\bm{\alpha}(\mbx_0)) \in \cA(\mbx_0;\gamma)$ and $B(\mbx_0;\Omega,\gamma) > 0 \Leftrightarrow \proj_\Omega (\bm{\alpha}(\mbx_0)) \notin \cA(\mbx_0;\gamma)$. Since by definition $\proj_\Omega ( \cdot ) \in \Omega$, the two aforementioned statements imply that $B(\mbx_0;\Omega,\gamma) < 0 \Leftrightarrow \Omega \cap \cA(\mbx_0;\gamma) \neq \varnothing$ and $B(\mbx_0;\Omega,\gamma) > 0 \Leftrightarrow \Omega \cap \cA(\mbx_0;\gamma) = \varnothing$. Then, in view of Lemma~\ref{lemma:apolsphere} again, under optimal play, the former statement implies that $P$ can win the game, whereas the latter statement implies that $E$ can win the game.
\end{proof}

\begin{corollary}
    Let $P$'s initial position, $\x_P^0$, be fixed, then the Projection of the Barrier Surface (PBS) onto the initial configuration of game space, $\breve \cB(\mbx_0;\Omega,\gamma) \subset \bR^n$, divides the latter space into the (disjoint) subsets $\breve \cR_c$ (projection of $P$'s winning set) and $\breve \cR_a$ (projection of $E$'s winning set). Then, under optimal play, if $\x_E^0 \in \breve \cR_c$, $P$ can win the game, whereas if $\x_E^0 \in \breve \cR_a$, $E$ can win the game.
\end{corollary}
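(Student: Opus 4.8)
The plan is to read the corollary as the specialization of Theorem~\ref{theo:barriersurface} to the affine slice $\cS := \{\x_P^0\} \times \bR^n \subset \bR^{2n}$ obtained by freezing the pursuer's initial position. First I would introduce the restricted barrier function $\tilde B : \bR^n \to \bR$, $\tilde B(\y) := B((\x_P^0,\y);\Omega,\gamma)$, and observe that, by construction, the PBS and the two projected winning sets are precisely its zero level set and its two strict level sets: $\breve\cB = \{\y \in \bR^n : \tilde B(\y) = 0\}$, $\breve\cR_c = \{\y \in \bR^n : \tilde B(\y) > 0\}$, and $\breve\cR_a = \{\y \in \bR^n : \tilde B(\y) < 0\}$. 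Equivalently, $\breve\cR_c$, $\breve\cB$, $\breve\cR_a$ are the images of $\cR_c \cap \cS$, $\cB \cap \cS$, $\cR_a \cap \cS$ under the bijection $\y \mapsto (\x_P^0,\y)$ between $\bR^n$ and $\cS$, where $\cR_c$, $\cB$, $\cR_a$ are the sets supplied by Theorem~\ref{theo:barriersurface}.

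Second, I would argue that these three sets partition $\bR^n$. Disjointness and exhaustiveness are immediate from the law of trichotomy applied to the real number $\tilde B(\y)$: every $\y \in \bR^n$ satisfies exactly one of $\tilde B(\y) > 0$, $\tilde B(\y) = 0$, $\tilde B(\y) < 0$. To justify that $\breve\cB$ genuinely \emph{divides} the space, i.e.\ that it separates $\breve\cR_c$ from $\breve\cR_a$ in the sense that no continuous path joins the two without meeting $\breve\cB$, I would invoke continuity of $\tilde B$: the map $\y \mapsto \bm{\alpha}((\x_P^0,\y))$ is affine, $\y \mapsto \beta((\x_P^0,\y))$ is Lipschitz, and $\proj_\Omega$ is (globally Lipschitz, hence) continuous because $\Omega$ is closed and convex; therefore $\tilde B$ is continuous, $\breve\cR_c$ and $\breve\cR_a$ are open, and by the intermediate value theorem any path from one to the other must cross the zero level set $\breve\cB$.

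Finally, the winning conclusions follow by transporting the corresponding statement of Theorem~\ref{theo:barriersurface} along the slice bijection: if $\x_E^0 \in \breve\cR_c$ then $\tilde B(\x_E^0) > 0$, i.e.\ $\mbx_0 = (\x_P^0,\x_E^0) \in \cR_c$, so $P$ wins the game under optimal play; if $\x_E^0 \in \breve\cR_a$ then $\mbx_0 \in \cR_a$, so $E$ wins under optimal play. I do not expect a substantive obstacle — the statement is essentially a corollary by definition. The only point that warrants explicit care, and which I would state, is the continuity of the metric projection $\proj_\Omega$ (and thus of $\tilde B$), since this is what upgrades the purely set-theoretic trichotomy to the topological claim that $\breve\cB$ is a bona fide separating surface between $\breve\cR_c$ and $\breve\cR_a$.
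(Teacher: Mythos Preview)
Your proposal is correct and is exactly the intended reading: the corollary is simply Theorem~\ref{theo:barriersurface} restricted to the slice $\{\x_P^0\}\times\bR^n$, and the paper in fact states it without proof. Your additional care about the continuity of $\proj_\Omega$ and the intermediate-value argument is sound but goes beyond what the paper asserts; the corollary only claims the set-theoretic partition and the winning conclusions, both of which follow from trichotomy and the bijection $\y\mapsto(\x_P^0,\y)$ as you describe.
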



\begin{remark}
    The barrier function~\eqref{eq:barrierfunction} involves a projection operator (i.e., optimization problem), which may or may not admit a closed-form solution, depending on the shape of TS.
\end{remark}


\subsection{Analytical Derivation of Barrier for Simple TS}

Here, we demonstrate a few special examples where one can derive the PBS of TPTDG analytically from Eq.~\eqref{eq:barrierfunction}, i.e., when TS has ``simple enough'' geometry, and compare our results with the related work.

\begin{example}[Singleton] \label{ex:singleton}
    Let TS be the singleton whose unique element is the origin, i.e., $\Omega_\mathrm{ST} := \left\{ \bm{0}_{n} \right\}$, then the projection of a point $\x \in \bR^n \backslash \Omega_\mathrm{ST}$ onto the set $\Omega_\mathrm{ST}$ is the origin itself. Eq.~\eqref{eq:barrierfunction} leads to
    \begin{align}
        \breve \cB(\x_P^0;\Omega_\mathrm{ST},\gamma) = \left\{ \z \in \bR^n \backslash \Omega_\mathrm{ST} : \|\z\| = \gamma \left\| \x_P^0 \right\| \right\},
    \end{align}
    which implies that the winning set of $E$ is the $n$-dimensional open ball centered at the origin with radius $\gamma \|\x_P^0\|$, which is in agreement with the result presented in \cite{selvakumar2019feedback} and could also be applied to \cite{garcia2020optimal} if $E$ therein was slower than $P$.
\end{example}

\begin{example}[Half-Space] \label{ex:halfspace}
Let TS be the closed half-space $\Omega_\mathrm{HS}^- := \{\z \in \bR^n : [\bm{0}_{n-1}^\top,1] \z \leq 0\}$ which is separated by the hyperplane $\cH := \{\z \in \bR^n : [\bm{0}_{n-1}^\top,1] \z = 0\}$. Since the projection of a point $\x \in \bR^n \backslash \Omega_\mathrm{HS}^-$ onto $\Omega_\mathrm{HS}^-$ belongs to $\cH$, we have that $\proj_{\Omega_\mathrm{HS}^-} (\x) = \mathrm{diag}[\bm{1}_{n-1}^\top,0] \x$. After substituting this into Eq.~\eqref{eq:barrierfunction} and some straightforward algebraic manipulation, one can obtain the equation of (the upper sheet of) the hyperboloidal PBS:
\begin{align} \label{eq:planePBS}
    &\breve \cB(\x_{P}^0;\Omega_\mathrm{HS}^-,\gamma) = \Big\{ \z \in \bR^n \backslash \Omega_\mathrm{HS}^- : 1= \nonumber
    \\
    & \left(\z-\mathrm{diag}[\bm{0}_{n-1}^\top,1] \x_P^0\right)^\top \bm{\Lambda} \left(\z-\mathrm{diag}[\bm{0}_{n-1}^\top,1] \x_P^0\right), \nonumber
    \\
    &~ \bm{\Lambda} := \mathrm{diag}\left[-(1-\gamma^2)^{-1} \bm{1}_{n-1}^\top,\gamma^{-2}\right]/\left(x_{P,n}^0\right)^2 \Big\},
\end{align}
where $x_{P,n}^0$ is the $n$-th element of $\x_P^0$. This result agrees with the solution for an one-to-one game presented in \cite{yan2020guarding}.
\end{example}

\begin{example}[Norm Ball] \label{ex:ball}
    Consider the closed norm ball centered at the origin with radius $r$, i.e., $\Omega_\mathrm{CB} := \{ \z \in \bR^n : \|\z\|^2 \leq r \}$. Given a point $\x \in \bR^n \backslash \Omega_\mathrm{CB}$, the three points $\x$, $\proj_{\Omega_\mathrm{CB}}(\x)$, and the origin are all colinear, so $\proj_{\Omega_\mathrm{CB}}(\x) = r \x/\|\x\|$. Then,
    Eq.~\eqref{eq:barrierfunction} can be rewritten as $B(\mbx_0;\Omega_\mathrm{CB},\gamma) = \|\bm{\alpha}(\mbx_0)\| - \beta(\mbx_0) - r$, and PBS (in an implicit form) is accordingly obtained as
    \begin{align} \label{eq:ballPBS}
        \breve \cB(\x_{P}^0;\Omega_\mathrm{CB},\gamma) &= \Big\{ \z \in \bR^n \backslash \Omega_\mathrm{CB} : \left\| \z-\gamma^2\x_{P}^0 \right\| \nonumber
        \\
        &\quad- \gamma \left\|\z-\x_{P}^0\right\| - (1-\gamma^2)r = 0 \Big\}.
    \end{align}
    When $n=2$, the explicit form of Eq.~\eqref{eq:ballPBS} is the Cartesian oval (or, in this particular case, Pascal's limacon) \cite{yan2017defense,garcia2019optimal,pachter2017differential}.
\end{example}

We have therefore shown that some of the previous work (e.g., \cite{garcia2020optimal,yan2020guarding,pachter2017differential}) have a common solution structure which is induced by the generalized barrier function~\eqref{eq:barrierfunction}. Three-dimensional illustrations of Examples~\ref{ex:singleton}, \ref{ex:halfspace}, and \ref{ex:ball} are provided in Figure~\ref{fig:barriers}.

\begin{figure}
    \centering
    \vspace{4mm}
    \includegraphics[scale=0.26]{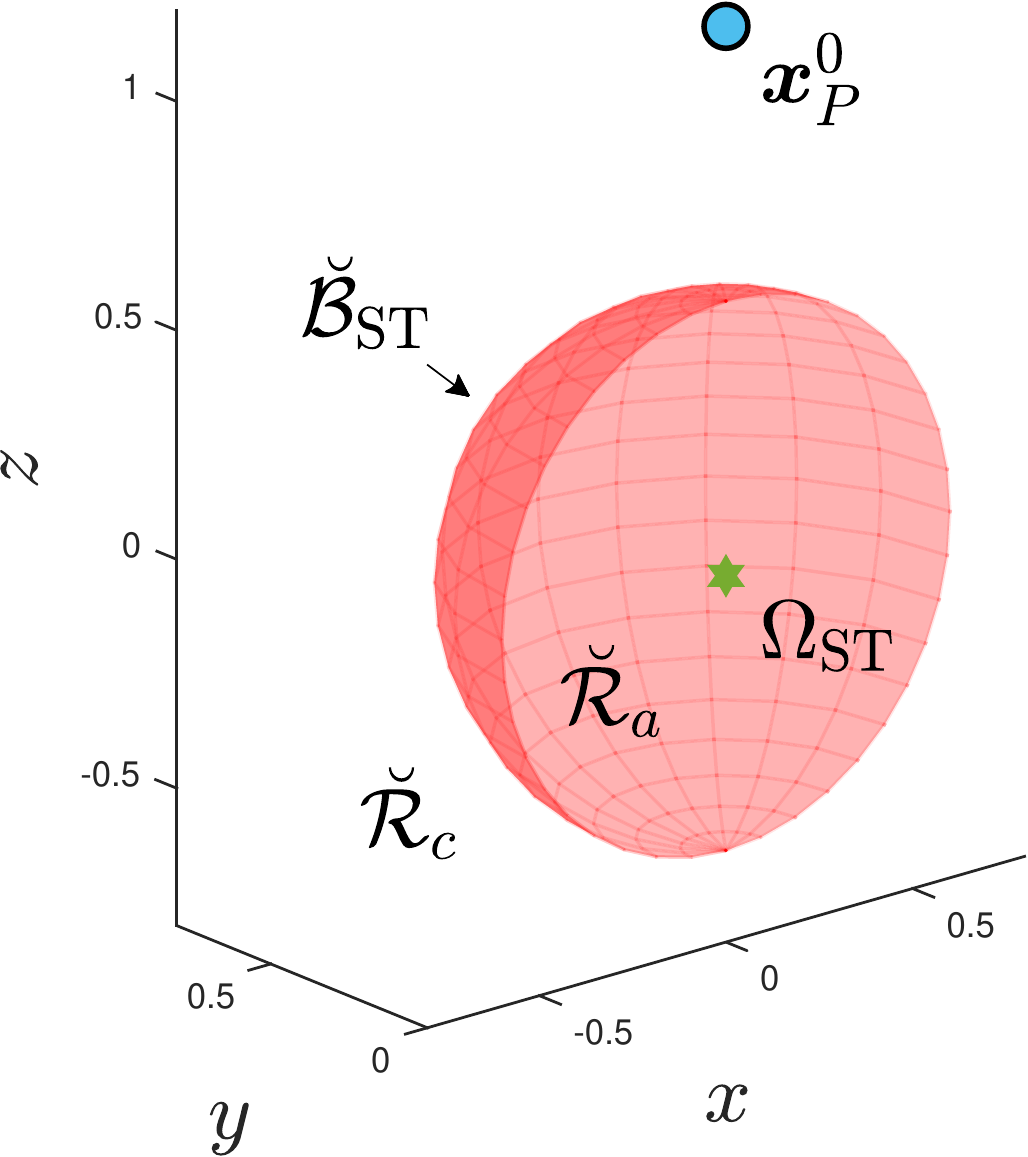}
    \qquad
    \includegraphics[scale=0.26]{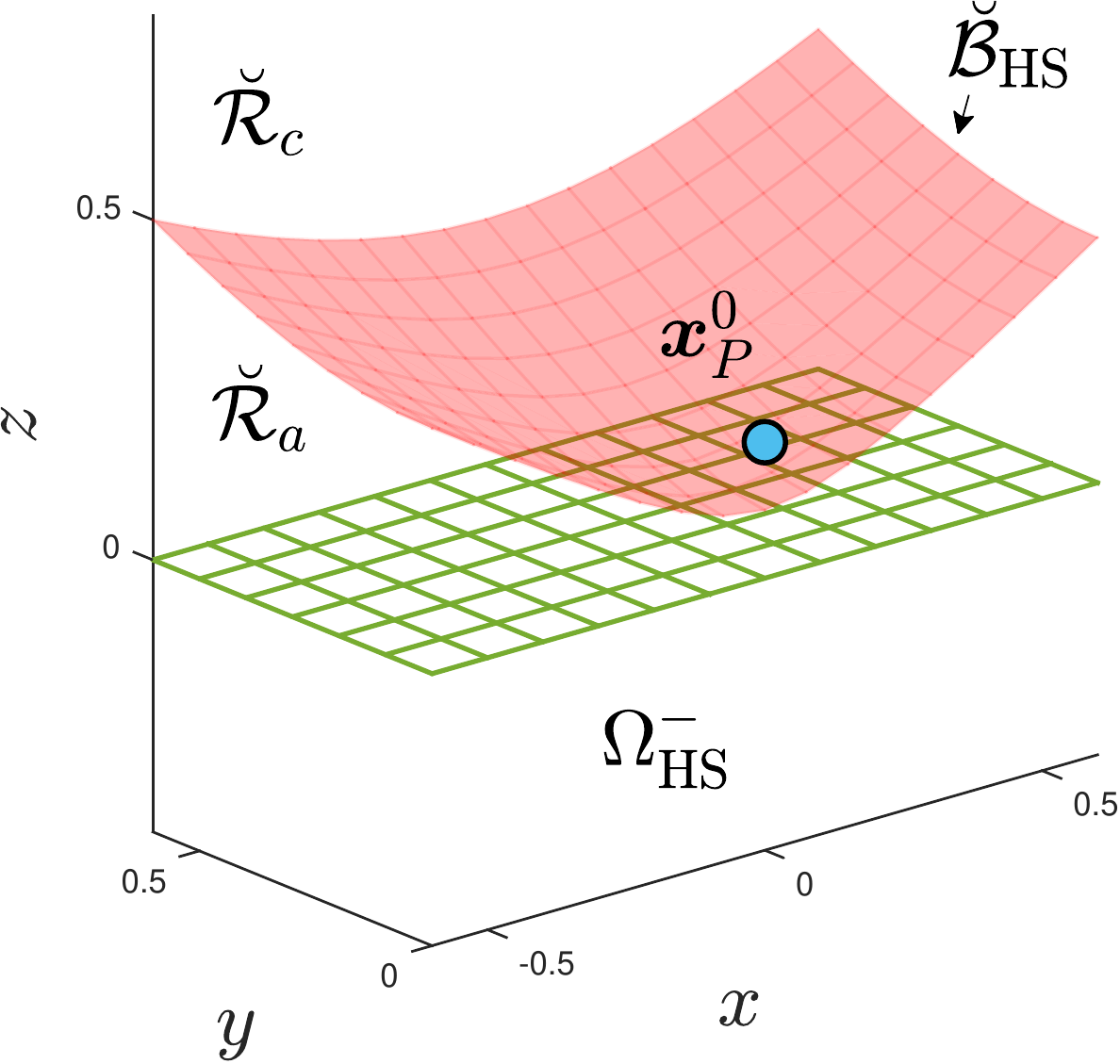}
    \includegraphics[scale=0.24]{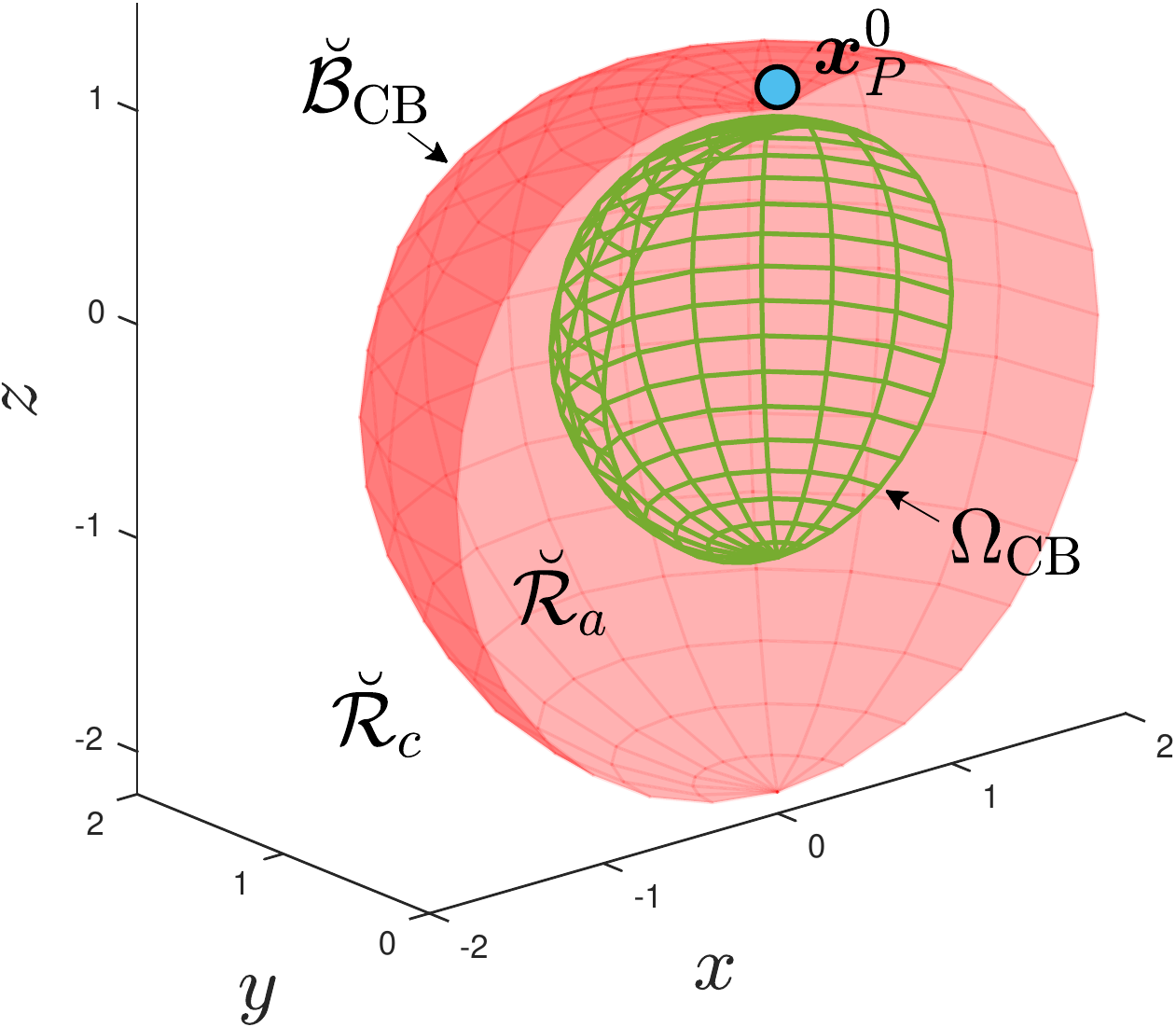}
    \qquad
    \includegraphics[scale=0.24]{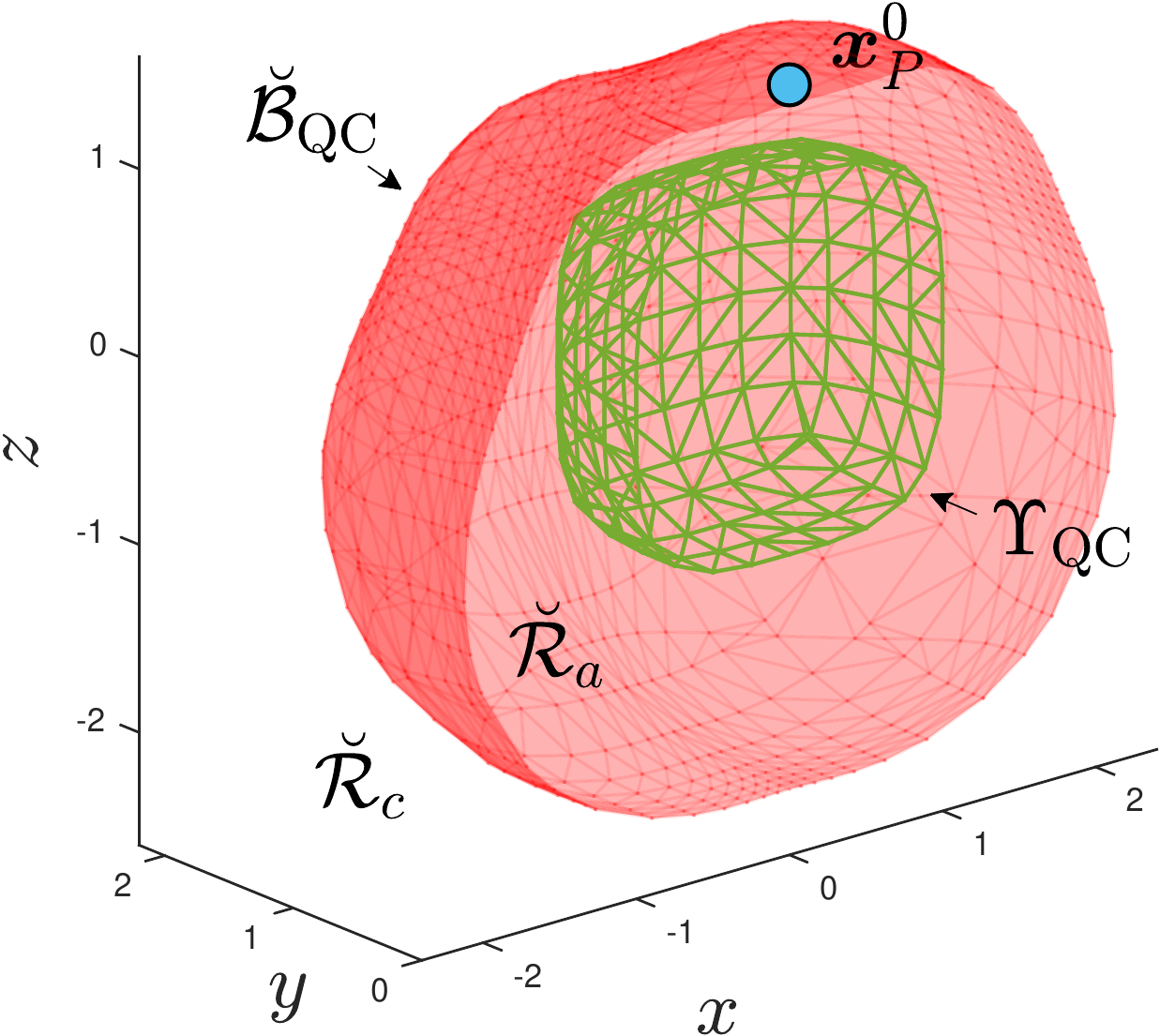}
    \caption{Three-dimensional PBS for TS given by 1) the singleton $\Omega_\mathrm{ST} := \{ \bm{0}_{3} \}$ (top left), 2) the closed lower half-space $\Omega_\mathrm{HS}^- := \{ (x,y,z) \in \bR^3 : z \leq 0 \}$ (top right), 3) the closed ball $\Omega_\mathrm{CB} := \{ (x,y,z) \in \bR^3 : x^2 + y^2 + z^2 \leq 1 \}$ (bottom left) and 4) the quartic cube $\Upsilon_\mathrm{QC} := \{(x,y,z) \in \bR^3 : x^4 + y^4 + z^4 \leq 1 \}$ (bottom right), all with speed ratio $\gamma = 0.5$. Only the half-section of TS and PBS are shown for clear visibility. PBS in the first three examples ($\breve \cB_\mathrm{ST}$, $\breve \cB_\mathrm{HS}$, and $\breve\cB_\mathrm{CB}$) has an analytical expression derived from Eq.~\eqref{eq:barrierfunction}, whereas in the last example it is obtained via Eq.~\eqref{eq:barriermap}.}
    \label{fig:barriers}
\end{figure}


\subsection{Barrier Transformation for TS with Smooth Surface}

If the projection operator has no closed-form expression (i.e., TS has nontrivial shape), PBS cannot be analytically derived from Eq.~\eqref{eq:barrierfunction}. We will show that, however, if TS has a smooth boundary surface, there exists a bijective map between PBS and the boundary surface of TS.
\begin{theorem}
    Given the TPTDG defined in Section~\ref{sec:problem_formulation}, let $\Upsilon$ (resp., $\partial \Upsilon$) denote a closed convex TS (resp., boundary surface of TS), where $\partial \Upsilon$ can be characterized by a convex and smooth function $F : \bR^n \rightarrow \bR$, i.e., $\partial \Upsilon := \{\z \in \bR^n : F(\z) = 0\}$. Let $P$'s initial position, $\x_{P}^0$, be fixed, then the (bijective) function $\mbT_{\partial \Upsilon \mapsto \breve \cB} : \partial \Upsilon \rightarrow \breve\cB$ defined by
    \begin{align} \label{eq:barriermap}
        &\mbT_{\partial \Upsilon \mapsto \breve \cB}(\partial \Upsilon;\x_P^0,\gamma) := \gamma^2 \x_P^0 + (1-\gamma^2) \x + \frac{\nabla F(\x)}{\| \nabla F(\x) \|^2} \cdot \nonumber
        \\
        &~~ \bigg[ -\gamma^2 \left \langle \x_P^0-\x,\nabla F(\x) \right \rangle + \gamma \Big( \gamma^2 \left \langle \x_P^0-\x,\nabla F(\x) \right \rangle^2 \nonumber
        \\
        &~~ - (1-\gamma^2) \| \nabla F(\x) \|^2 \left\|\x_P^0-\x\right\|^2 \Big)^{\frac{1}{2}} \bigg],~~\x \in \partial \Upsilon
    \end{align}
    maps $\partial \Upsilon$ to PBS, i.e., $\breve{\cB}(\x_P^0;\Upsilon,\gamma) = \mbT_{\partial \Upsilon \mapsto \breve \cB}(\partial \Upsilon; \x_P^0,\gamma)$.
\end{theorem}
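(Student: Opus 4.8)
The plan is to identify $\breve{\cB}$ explicitly through the projection point of the Apollonius center and then to recognize \eqref{eq:barriermap} as the unique admissible solution of a scalar quadratic. By the definition of the barrier function \eqref{eq:barrierfunction} in Theorem~\ref{theo:barriersurface} (together with $\|\bm{\alpha}-\proj_\Upsilon(\bm{\alpha})\|=\dist(\bm{\alpha},\Upsilon)$), a point $\z:=\x_E^0$ lies in $\breve{\cB}(\x_P^0;\Upsilon,\gamma)$, the slice of $\cB$ at the fixed $\x_P^0$, iff $\dist(\bm{\alpha},\Upsilon)=\beta$, where $\bm{\alpha}=(\z-\gamma^2\x_P^0)/(1-\gamma^2)$ and $\beta=\gamma\|\z-\x_P^0\|/(1-\gamma^2)$. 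On the barrier $\beta>0$ (apart from the degenerate configuration $\z=\x_P^0\in\partial\Upsilon$), so $\bm{\alpha}\notin\Upsilon$ and $\x:=\proj_\Upsilon(\bm{\alpha})\in\partial\Upsilon$. The geometric fact I will lean on is that, since $\Upsilon$ is convex and $F$ is convex and smooth with $\nabla F\neq\bm{0}$ on $\partial\Upsilon$, one has $\proj_\Upsilon(\x+t\,\nabla F(\x))=\x$ for every $\x\in\partial\Upsilon$ and every $t\ge0$, and conversely every point whose projection is $\x$ has this form: for $\z'\in\Upsilon$ convexity gives $\langle\nabla F(\x),\z'-\x\rangle\le F(\z')-F(\x)\le0$, which is the variational inequality characterizing the projection, while smoothness of $\partial\Upsilon$ makes $\nabla F(\x)$ the unique outward normal direction.

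First I would prove surjectivity onto $\breve{\cB}$. Fix $\x\in\partial\Upsilon$, write $\bm{\alpha}=\x+t\,\nabla F(\x)$ and $\z=(1-\gamma^2)\bm{\alpha}+\gamma^2\x_P^0$, so that $\dist(\bm{\alpha},\Upsilon)=t\,\|\nabla F(\x)\|$ by the geometric fact. Since $\z-\x_P^0=(1-\gamma^2)\big[(\x-\x_P^0)+t\,\nabla F(\x)\big]$, the barrier equation $\dist(\bm{\alpha},\Upsilon)=\beta$ reduces to $t\,\|\nabla F(\x)\|=\gamma\,\|(\x-\x_P^0)+t\,\nabla F(\x)\|$; squaring and rearranging yields the quadratic $(1-\gamma^2)\|\nabla F(\x)\|^2\,t^2-2\gamma^2\langle\x-\x_P^0,\nabla F(\x)\rangle\,t-\gamma^2\|\x-\x_P^0\|^2=0$, whose constant term is nonpositive, hence which admits a unique root $t\ge0$. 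Substituting that root into $\z=(1-\gamma^2)\x+\gamma^2\x_P^0+(1-\gamma^2)t\,\nabla F(\x)$ reproduces exactly the closed form $\mbT_{\partial\Upsilon\mapsto\breve\cB}(\x;\x_P^0,\gamma)$ of \eqref{eq:barriermap}. Finally, feeding the same root into $\langle\nabla F(\x),\z-\x\rangle=\gamma^2\langle\x_P^0-\x,\nabla F(\x)\rangle+(1-\gamma^2)t\,\|\nabla F(\x)\|^2$ collapses it to a nonnegative expression, strictly positive when $\x\neq\x_P^0$; convexity of $F$ then gives $F(\z)\ge\langle\nabla F(\x),\z-\x\rangle>0$, so $\z\in\bR^n\backslash\Upsilon$ and hence $\z\in\breve{\cB}(\x_P^0;\Upsilon,\gamma)$.

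Next I would establish injectivity together with the reverse inclusion by exhibiting the inverse. Given $\z\in\breve{\cB}$, set $\x:=\proj_\Upsilon(\bm{\alpha}(\z))\in\partial\Upsilon$; by the geometric fact $\bm{\alpha}(\z)-\x=t\,\nabla F(\x)$ with $t=\dist(\bm{\alpha},\Upsilon)/\|\nabla F(\x)\|\ge0$, and the barrier equation $\dist(\bm{\alpha},\Upsilon)=\beta$ forces this $t$ to satisfy the same quadratic as above, hence to coincide with its nonnegative root; therefore $\z=(1-\gamma^2)\big(\x+t\,\nabla F(\x)\big)+\gamma^2\x_P^0=\mbT_{\partial\Upsilon\mapsto\breve\cB}(\x;\x_P^0,\gamma)$. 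Thus $\z\mapsto\proj_\Upsilon(\bm{\alpha}(\z))$ is a left inverse of $\mbT_{\partial\Upsilon\mapsto\breve\cB}$, while the surjectivity step shows it is also a right inverse ($\proj_\Upsilon(\bm{\alpha}(\mbT_{\partial\Upsilon\mapsto\breve\cB}(\x)))=\x$), so $\mbT_{\partial\Upsilon\mapsto\breve\cB}$ is a bijection of $\partial\Upsilon$ onto $\breve{\cB}(\x_P^0;\Upsilon,\gamma)$, which is the claim.

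I expect the main obstacle to be the clean treatment of the projection step — showing rigorously that on the barrier $\bm{\alpha}$ lies strictly outside $\Upsilon$, that the smooth convex boundary admits only the normal direction $\nabla F(\x)$, and hence that the outward normal rays sweep $\breve{\cB}$ exactly once — rather than the algebra, which is a routine completion of the square followed by selection of the nonnegative root. One must also handle the degenerate configuration $\x_P^0\in\partial\Upsilon$ separately, where the quadratic forces $t=0$ and $\mbT_{\partial\Upsilon\mapsto\breve\cB}$ simply fixes $\x_P^0$.
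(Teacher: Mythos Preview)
Your proposal is correct and follows essentially the same route as the paper: both parametrize the candidate evader position along the outward normal $\nabla F(\x)$ at a boundary point $\x\in\partial\Upsilon$, reduce the barrier condition to the same scalar quadratic (your parameter $t$ is the paper's $\xi/(1-\gamma^2)$), and select the unique nonnegative root. You are more explicit than the paper in establishing bijectivity---constructing the inverse $\z\mapsto\proj_\Upsilon(\bm{\alpha}(\z))$ and checking $\z\notin\Upsilon$ via convexity of $F$---whereas the paper simply infers bijectivity from the uniqueness of the positive root.
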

\begin{proof}
    Let us choose an arbitrary point of the boundary of TS, $\p \in \partial \Upsilon$, and assume that $\x_P(t_f) = \x_E(t_f) = \p$, then $\p$ corresponds to the unique tangent point between $\cA(\mbx_0;\gamma)$ and $\Upsilon$ due to the uniqueness of projection that follows from the convexity of TS and the circular shape of SR. Let us denote the initial position of $E$ in such case by $\x_E^{0*}$ (and the corresponding game state by $\mbx_0^*$), then from Proposition~\ref{prop:hamiltonian} and Lemma~\ref{lemma:apolsphere} we know that
    \begin{align} \label{eq:prop3proofeq1}
        \| \p - \x_E^{0*} \| = \gamma \| \p - \x_P^0 \|.
    \end{align}
    Squaring both sides of Eq.~\eqref{eq:prop3proofeq1} leads to an equation of the hypersphere that includes all candidate positions of $\x_E^{0*}$:
    \begin{align} \label{eq:prop3proofeq2}
        \left\|\x_E^{0*}\right\|^2 &- 2 \left\langle \x_E^{0*},\p \right\rangle - \gamma^2 \|\x_P\|^2 + \nonumber
        \\
        &\qquad\qquad (1-\gamma^2) \|\p\|^2 + 2\gamma^2 \left\langle \x_P^0,\p \right\rangle = 0.
    \end{align}
    Furthermore, the center of SR, $\bm{\alpha}(\mbx_0^*)$, belongs to the line that is orthogonal to $\partial \Omega$ at $\p$. Shifting this line towards $\x_P^0$ to align it with the possible positions of $E$, one can obtain
    \begin{align} \label{eq:prop3proofeq3}
        \x_E^{0*} = \nabla F(\p) \xi + \gamma^2 \x_P^0 + (1-\gamma^2) \p,
    \end{align}
    for some $\xi \in \bR_{\geq 0}$. Substituting Eq.~\eqref{eq:prop3proofeq3} into Eq.~\eqref{eq:prop3proofeq2} results in the following algebraic quadratic equation:
    \begin{align} \label{eq:prop3proofeq4}
        &\| \nabla F(\p) \|^2 \xi^2 + 2\gamma^2 \left \langle \x_P^0-\p,\nabla F(\p) \right \rangle \xi \nonumber
        \\
        &\qquad\qquad\qquad\qquad -\gamma^2(1-\gamma^2) \left\|\x_P^0-\p\right\|^2 = 0,
    \end{align}
    which always has the unique positive solution, $\xi^+ > 0$, since its discriminant is positive and the signs of its first and third coefficients are opposite, thereby establishing the bijectivity of the transformation. We finish the proof by substituting $\xi^+$ into Eq.~\eqref{eq:prop3proofeq3} to obtain Eq.~\eqref{eq:barriermap}.
\end{proof}


A non-trivial application of Eq.~\eqref{eq:barriermap} is shown in Figure~\ref{fig:barriers}, where the boundary of TS is given by a quartic surface for which Eq.~\eqref{eq:barrierfunction} admits no closed-form solution.




\section{Games of Degree} \label{sec:gamesofdegree}

In this section, we address the Capture and Attack Games of Degree by deriving optimal state feedback strategies in each player's winning set (whose open-loop representations are known from Proposition~\ref{prop:hamiltonian}). In particular, the optimality of these strategies is verified by the HJI equation \cite{bacsar1998dynamic} which for Mayer-type problems is given by
\begin{align} \label{eq:hjipde}
    -\partial V/\partial t = (\partial V/\partial \mbx) \mbf(\mbx,\uu_P^\star,\uu_E^\star),
\end{align}
where $V$ corresponds to Eq.~\eqref{eq:valfun1} for the Capture Game of Degree and Eq.~\eqref{eq:valfun2} for the Attack Game of Degree.

\subsection{Capture Game of Degree}

The Capture Game of Degree is first addressed for the case when $\mbx \in \cR_c$, that is, when capture is assured provided that $P$ plays optimally.

\begin{theorem} \label{theo:capgamedegree}
    Given the TPTDG defined in Section~\ref{sec:problem_formulation} and $\mbx \in \cR_c$, the value function $V_c : \cR_c \rightarrow \bR$ is $C^1$ (continuously differentiable) in $\cR_c$ and satisfies Eq.~\eqref{eq:hjipde}, where
    \begin{align}
        V_c(\mbx) &= \| \bm{\alpha}(\mbx)-\proj_\Omega (\bm{\alpha}(\mbx)) \| - \beta(\mbx). \label{eq:gamedegree_valuefunc_capture}
    \end{align}
    Furthermore, the optimal state feedback strategies for $P$ and $E$ in $\cR_c$, $\bm{\delta}_P^\star : \cR_c \rightarrow \cU$ and $\bm{\delta}_E^\star : \cR_c \rightarrow \cU$, are defined by
    \begin{equation}
        \bm{\delta}_P^\star(\mbx) =
        \dfrac{\x^\star - \x_P}{\| \x^\star - \x_P \|}, \quad \bm{\delta}_E^\star(\mbx) =
        \dfrac{\x^\star - \x_E}{\| \x^\star - \x_E \|}, \label{eq:capturegame_optinputs}
    \end{equation}
    where the optimal capture point $\x^\star$ is given by
    \begin{align}
        \x^\star &= \bm{\alpha}(\mbx) - \beta(\mbx) \frac{\bm{\alpha}(\mbx)-\proj_\Omega (\bm{\alpha}(\mbx))}{\| \bm{\alpha}(\mbx)-\proj_\Omega (\bm{\alpha}(\mbx)) \|}.
    \end{align}
\end{theorem}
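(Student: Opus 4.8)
The plan is to use the geometric machinery already established --- Proposition~\ref{prop:hamiltonian} on straight-line optimal motion and Lemma~\ref{lemma:apolsphere} on the Apollonius reachable set --- to pin down $V_c$ and the optimal capture point, then to verify the claimed $C^1$ regularity and the HJI equation~\eqref{eq:hjipde} by direct differentiation, and finally to read off the saddle-point property from a standard verification argument. For the first part, by Proposition~\ref{prop:hamiltonian} each player travels along a ray, and by Lemma~\ref{lemma:apolsphere} the set of terminal positions $E$ can attain before capture is exactly the closed Apollonius ball $\overline{\cA(\mbx)}$ with centre $\bm{\alpha}(\mbx)$ and radius $\beta(\mbx)$. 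Since $\mbx\in\cR_c$ gives $\Omega\cap\cA(\mbx)=\varnothing$, the point of $\overline{\cA(\mbx)}$ closest to $\Omega$ is obtained by moving from $\bm{\alpha}(\mbx)$ a distance $\beta(\mbx)$ toward $\proj_\Omega(\bm{\alpha}(\mbx))$, which is exactly $\x^\star$; the associated minimum distance is $\dist(\bm{\alpha}(\mbx),\Omega)-\beta(\mbx)$, yielding~\eqref{eq:gamedegree_valuefunc_capture}. Because $\x^\star\in\partial\cA(\mbx)$ satisfies $\|\x^\star-\x_E\|=\gamma\|\x^\star-\x_P\|$, the two players, each steering straight at $\x^\star$, arrive simultaneously, so capture indeed takes place at $\x^\star$, which gives~\eqref{eq:capturegame_optinputs}.

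For the regularity and the gradient I would rewrite $V_c(\mbx)=\dist(\bm{\alpha}(\mbx),\Omega)-\beta(\mbx)$ and invoke the classical fact that, for a nonempty closed convex set $\Omega$, the map $\z\mapsto\dist(\z,\Omega)$ is convex and continuously differentiable on $\bR^n\setminus\Omega$ with $\nabla\dist(\z,\Omega)=(\z-\proj_\Omega(\z))/\|\z-\proj_\Omega(\z)\|$ --- note that $\proj_\Omega$ itself need not be differentiable. Since $B(\mbx)>0$ forces $\dist(\bm{\alpha}(\mbx),\Omega)>\beta(\mbx)\ge 0$, we have $\bm{\alpha}(\mbx)\notin\Omega$ on $\cR_c$; as $\bm{\alpha}$ is affine in $\mbx$ and $\beta$ is $C^1$ wherever $\x_P\ne\x_E$, the composition is $C^1$ on $\cR_c$ away from $\cT_c$ (points of $\cR_c$ lying in $\cT_c$ are those with $t_f=0$, where the game has already ended). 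Writing $\bm{\nu}:=(\bm{\alpha}(\mbx)-\proj_\Omega(\bm{\alpha}(\mbx)))/\|\bm{\alpha}(\mbx)-\proj_\Omega(\bm{\alpha}(\mbx))\|$ and $\e:=(\x_E-\x_P)/\|\x_E-\x_P\|$, the chain rule with $\partial\bm{\alpha}/\partial\x_P=-\tfrac{\gamma^2}{1-\gamma^2}I$, $\partial\bm{\alpha}/\partial\x_E=\tfrac{1}{1-\gamma^2}I$, $\nabla_{\x_P}\beta=-\tfrac{\gamma}{1-\gamma^2}\e$ and $\nabla_{\x_E}\beta=\tfrac{\gamma}{1-\gamma^2}\e$ gives $\partial V_c/\partial\x_P=\tfrac{\gamma}{1-\gamma^2}(\e-\gamma\bm{\nu})$ and $\partial V_c/\partial\x_E=\tfrac{1}{1-\gamma^2}(\bm{\nu}-\gamma\e)$, both nonzero because $\|\e\|=\|\bm{\nu}\|=1>\gamma$.

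Finally, I would expand $\x^\star=\bm{\alpha}(\mbx)-\beta(\mbx)\bm{\nu}$ to obtain $\x^\star-\x_P=\tfrac{\|\x_E-\x_P\|}{1-\gamma^2}(\e-\gamma\bm{\nu})$ and $\x^\star-\x_E=\tfrac{\gamma\|\x_E-\x_P\|}{1-\gamma^2}(\gamma\e-\bm{\nu})$, so that the strategies~\eqref{eq:capturegame_optinputs} are exactly $\bm{\delta}_P^\star=(\partial V_c/\partial\x_P)/\|\partial V_c/\partial\x_P\|$ and $\bm{\delta}_E^\star=-(\partial V_c/\partial\x_E)/\|\partial V_c/\partial\x_E\|$, i.e.\ the unique pointwise Isaacs min--max of the separable Hamiltonian of Proposition~\ref{prop:hamiltonian}. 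Since $V_c$ does not depend on $t$, the left-hand side of~\eqref{eq:hjipde} vanishes, while the right-hand side equals $v_P\|\partial V_c/\partial\x_P\|-v_E\|\partial V_c/\partial\x_E\|$; using $\|\e-\gamma\bm{\nu}\|=\|\bm{\nu}-\gamma\e\|$ (both are unit-vector combinations) this reduces to $\tfrac{\|\e-\gamma\bm{\nu}\|}{1-\gamma^2}(\gamma v_P-v_E)$, which is $0$ because $\gamma=v_E/v_P$. Hence~\eqref{eq:hjipde} holds, and a standard verification argument then certifies that $V_c$ is the value of the Capture Game of Degree and $(\bm{\delta}_P^\star,\bm{\delta}_E^\star)$ its unique saddle point.

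The one genuinely delicate point, and the main obstacle, is the $C^1$ claim: the argument must be routed through $\dist(\cdot,\Omega)$ (which is $C^1$ outside a closed convex set) rather than through $\proj_\Omega$ (which can fail to be differentiable, e.g.\ for polytopal $\Omega$), and one must keep track of the locus $\cT_c$ on which $\beta$ is not smooth. Once the substitutions $\bm{\nu}$ and $\e$ are made, everything else is a short computation, and the HJI equation collapses neatly onto the speed-ratio identity $\gamma v_P=v_E$.
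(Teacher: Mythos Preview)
Your proposal is correct and follows essentially the same route as the paper: compute $\partial V_c/\partial\mbx$ by differentiating $\dist(\bm{\alpha}(\mbx),\Omega)$ (the paper invokes Danskin's theorem, you cite the equivalent classical $C^1$ fact for the distance to a closed convex set), rewrite $\bm{\delta}_P^\star,\bm{\delta}_E^\star$ in terms of the unit vectors $\e$ and $\bm{\nu}$, and then substitute into~\eqref{eq:hjipde} to see it collapse to zero via $\gamma v_P=v_E$. Your explicit caution that one must differentiate $\dist(\cdot,\Omega)$ rather than $\proj_\Omega$, and your handling of the locus $\cT_c$, are welcome refinements but do not change the underlying argument.
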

\begin{proof}
    First, let us take the partial derivatives of $\bm{\alpha}(\mbx)$ and $\beta(\mbx)$ with respect to $\mbx$:
    \begin{align}
        \frac{\partial \bm{\alpha}(\mbx)}{\partial \mbx} &= \frac{1}{1-\gamma^2}
        \begin{bmatrix}
            - \gamma^2 I_{n\times n} & I_{n \times n}
        \end{bmatrix}, \label{eq:capturegame_Cpartial}
        \\
        \frac{\partial \beta(\mbx)}{\partial \mbx} &= \frac{\gamma}{1-\gamma^2}
        \begin{bmatrix}
            -\frac{(\x_E - \x_P)^\top}{\| \x_E - \x_P \|} & \frac{(\x_E - \x_P)^\top}{\| \x_E - \x_P \|}
        \end{bmatrix} \label{eq:capturegame_Rpartial}
    \end{align}
    In Theorem~\ref{theo:barriersurface}, the value function has been derived as
    \begin{align}
        V_c(\mbx) = \| \bm{\alpha}(\mbx)-\proj_\Omega (\bm{\alpha}(\mbx)) \| - \beta(\mbx).
    \end{align}
    Note that the function $\mbx \mapsto \|\bm{\alpha}(\mbx)-\z\|$ is continuous and convex on $\cR_c$ for any given $\z \in \Omega$, and the projection $\proj_\Omega (\bm{\alpha}(\mbx))$ is unique since TS is closed and convex. Let $\bm{g}(\bm{\alpha}(\mbx)) := \bm{\alpha}(\mbx)-\proj_\Omega (\bm{\alpha}(\mbx))$ for brevity, then by virtue of Danskin's theorem~\cite{bertsekas2003convex}, it follows that
    \begin{align} \label{eq:grad_Vc}
        \frac{\partial V_c(\mbx)}{\partial \mbx} &= \frac{\partial \| \bm{g}(\bm{\alpha}(\mbx)) \|}{\partial \mbx} - \frac{\partial \beta(\mbx)}{\partial \mbx} \nonumber
        \\
        &= \left[ \frac{\partial \| \bm{g}(\bm{\alpha}(\mbx)) \|}{\partial \bm{\alpha}(\mbx)} \right]^\top ~ \frac{\partial \bm{\alpha}(\mbx)}{\partial \mbx} - \frac{\partial \beta(\mbx)}{\partial \mbx} \nonumber
        \\
        &= \left[ \frac{\bm{g}(\bm{\alpha}(\mbx))}{\| \bm{g}(\bm{\alpha}(\mbx)) \|} \right]^\top \frac{\partial \bm{\alpha}(\mbx)}{\partial \mbx} - \frac{\partial \beta(\mbx)}{\partial \mbx} \nonumber
        \\
        &= \frac{1}{1-\gamma^2}
        \begin{bmatrix}
            -\gamma^2 \frac{\bm{g}(\bm{\alpha}(\mbx))}{\|\bm{g}(\bm{\alpha}(\mbx))\|} + \gamma \frac{\x_E-\x_P}{\| \x_E - \x_P \|}
            \\
            \frac{\bm{g}(\bm{\alpha}(\mbx))}{\|\bm{g}(\bm{\alpha}(\mbx))\|} - \gamma \frac{\x_E-\x_P}{\| \x_E - \x_P \|}
        \end{bmatrix}^\top.
    \end{align}
    Since its partial derivatives exist and are continuous over $\cR_c$, where $(\x_P \neq \x_E) \wedge (\bm{\alpha}(\mbx) \neq \proj_\Omega(\bm{\alpha}(\mbx))$, $V_c$ is $C^1$ in the same set. Next, using the facts that $\x_P$, $\x_E$, and $\bm{\alpha}(\mbx)$ are colinear, i.e., $(\bm{\alpha}(\mbx)-\x_P)/\|\bm{\alpha}(\mbx)-\x_P\| = (\x_E-\x_P)/\|\x_E-\x_P\|$, and that $\|\bm{\alpha}(\mbx)-\x_E\|=\gamma^2\|\bm{\alpha}(\mbx)-\x_P\|=\gamma \beta(\mbx)$, \eqref{eq:capturegame_optinputs} can be written as
    \begin{equation} \label{eq:rewritten_inputs}
    \begin{aligned}
        \bm{\delta}_P^\star(\mbx) &= \beta(\mbx) \frac{\frac{\x_E-\x_P}{\|\x_E-\x_P\|}-\gamma \frac{\bm{g}(\bm{\alpha}(\mbx))}{\|\bm{g}(\bm{\alpha}(\mbx))\|}}{\gamma\| \x^\star - \x_P \|},
        \\
        \bm{\delta}_E^\star(\mbx) &= \beta(\mbx) \frac{\gamma\frac{\x_E-\x_P}{\|\x_E-\x_P\|}-\frac{\bm{g}(\bm{\alpha}(\mbx))}{\|\bm{g}(\bm{\alpha}(\mbx))\|}}{\| \x^\star - \x_P \|}.
    \end{aligned}
    \end{equation}
    Finally, we have that $\partial V_c/\partial t = 0$ since $V_c$ is time-invariant. Substituting \eqref{eq:grad_Vc} and \eqref{eq:rewritten_inputs} into the RHS of Eq.~\eqref{eq:hjipde} yields
    \begin{align}
        &(\partial V_c / \partial \mbx)  \mbf(\mbx,\bm{\delta}_P^\star,\bm{\delta}_E^\star) = (\partial V_c / \partial \mbx) v_P \left[ \bm{\delta}_P^{\star \top} ~ \gamma \bm{\delta}_E^{\star \top} \right]^\top \nonumber
        \\
        &= \frac{v_P \beta(\mbx)}{(1-\gamma^2)\|\x^\star - \x_P\|} \cdot \nonumber
        \\
        &\quad
        \begin{bmatrix}
            - \frac{\gamma^2\bm{g}(\bm{\alpha}(\mbx))}{\|\bm{g}(\bm{\alpha}(\mbx))\|} + \frac{\gamma(\x_E-\x_P)}{\| \x_E - \x_P \|}
            \\
            \frac{\bm{g}(\bm{\alpha}(\mbx))}{\|\bm{g}(\bm{\alpha}(\mbx))\|} - \frac{\gamma(\x_E-\x_P)}{\| \x_E - \x_P \|}
        \end{bmatrix}^\top
        \begin{bmatrix}
            \frac{\x_E-\x_P}{\gamma\|\x_E-\x_P\|}-\frac{\bm{g}(\bm{\alpha}(\mbx))}{\|\bm{g}(\bm{\alpha}(\mbx))\|}
            \\
            \frac{\gamma(\x_E-\x_P)}{\|\x_E-\x_P\|}-\frac{\bm{g}(\bm{\alpha}(\mbx))}{\|\bm{g}(\bm{\alpha}(\mbx))\|}
        \end{bmatrix} \nonumber
        \\
        &= 0.
    \end{align}
    This completes the proof.
\end{proof}

\begin{remark}
    Since $V_{c}(\mbx)$ is $C^1$ on $\cR_c$, there is no dispersal surface in $\cR_c$ and thus, the optimal state feedback strategies $\bm{\delta}_P^\star(\cdot)$ and $\bm{\delta}_E^\star(\cdot)$ correspond to the unique saddle point of $J_c$. These strategies have closed-form expressions when TS is simple; otherwise they can be computed by solving the optimization problem (projection) numerically.
    \label{remark:capgame_semiperm}
\end{remark}

\subsection{Attack Game of Degree}
Next, the Attack Game of Degree is addressed for the case where $\mbx \in \cR_a$, i.e., attack is ensured if $E$ plays optimally.

\begin{theorem} \label{theo:attgamedegree}
    Given the TPTDG defined in Section~\ref{sec:problem_formulation} and $\mbx \in \cR_a$, the value function $V_a : \cR_a \rightarrow \bR$ is $C^1$ on $\cR_a$ and satisfies Eq.~\eqref{eq:hjipde}, where
    \begin{equation} \label{eq:gamedegree_valuefun_attack}
        V_a(\mbx) = -\| \x^\dagger - \x_P \| + \| \x^\dagger - \x_E \| / \gamma.
    \end{equation}
    The optimal state feedback strategies for $P$ and $E$ in $\cR_a$, $\bm{\theta}_P^\star : \cR_a \rightarrow \cU$ and $\bm{\theta}_E^\star : \cR_a \rightarrow \cU$, are defined by
    \begin{equation}
        \bm{\theta}_P^\star(\mbx) =
        \dfrac{\x^\dagger - \x_P}{\| \x^\dagger - \x_P \|}, \quad \bm{\theta}_E^\star(\mbx) =
        \dfrac{\x^\dagger - \x_E}{\| \x^\dagger - \x_E \|},
        \label{eq:attackgame_optinputs}
    \end{equation}
    where the optimal attack point $\x^\dagger$ is given by
    \begin{equation}\label{eq:attackgame_optattpoint}
       \x^\dagger = \argmin_{\z \in \mathrm{cl}(\cA(\mbx;\gamma)) \cap \Omega} -\| \z - \x_P \| + \| \z - \x_E \|/\gamma.
   \end{equation}
\end{theorem}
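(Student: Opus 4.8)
The plan is to follow the two-stage template of Theorem~\ref{theo:capgamedegree}: first collapse the Attack Game of Degree on $\cR_a$ to a finite-dimensional optimization over the attack point, and then certify the candidate triple $(V_a,\bm{\theta}_P^\star,\bm{\theta}_E^\star)$ through the HJI equation~\eqref{eq:hjipde}. For the reduction, Proposition~\ref{prop:hamiltonian} guarantees that the saddle-point motions are straight lines, and $\mbx\in\cR_a$ forces termination on $\cT_a$. If $E$ runs straight to a point $\z\in\Omega$ it arrives at $t_f=\|\z-\x_E\|/v_E$, whereupon $P$'s best reply is pure pursuit toward $\z$, leaving a terminal separation $\|\z-\x_P\|-\|\z-\x_E\|/\gamma$; extremizing this quantity over $\z$ is precisely the program~\eqref{eq:attackgame_optattpoint}, which yields $\x^\dagger$, hence $V_a$ as in~\eqref{eq:gamedegree_valuefun_attack} and the feedback laws~\eqref{eq:attackgame_optinputs}. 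I would first record that the ``$\mathrm{cl}(\cA(\mbx;\gamma))$'' constraint in~\eqref{eq:attackgame_optattpoint} is inactive at the optimum: since $\mbx\in\cR_a$ means $B(\mbx;\Omega,\gamma)<0$, Theorem~\ref{theo:barriersurface} gives $\proj_\Omega(\bm{\alpha}(\mbx))\in\cA(\mbx;\gamma)$, so the optimal value of~\eqref{eq:attackgame_optattpoint} is strictly negative, and wherever that objective is negative the corresponding point lies in $\cA(\mbx;\gamma)$. Hence $\x^\dagger\in\cA(\mbx;\gamma)\cap\partial\Omega$ --- on $\partial\Omega$ because the objective has no stationary point in $\bR^n$ (its gradient has norm bounded away from zero) --- and the minimization may be carried out over the fixed set $\Omega$ rather than over the $\mbx$-dependent set $\mathrm{cl}(\cA(\mbx;\gamma))\cap\Omega$.

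The technical heart is to show that $\x^\dagger$ is uniquely determined, varies continuously with $\mbx$, and that $V_a$ is $C^1$ on $\cR_a$. Unlike the capture game, whose minimand is convex, here the minimand $\psi(\z;\mbx):=-\|\z-\x_P\|+\|\z-\x_E\|/\gamma$ is only a difference of convex functions, so uniqueness of its minimizer over $\Omega$ is not automatic. A helpful reduction: since $\gamma<1$, the derivative of $\tau\mapsto\psi(\x_E+\tau\mathbf{d};\mbx)$ equals $\gamma^{-1}-\langle\mathbf{d},(\x_E+\tau\mathbf{d}-\x_P)/\|\x_E+\tau\mathbf{d}-\x_P\|\rangle>0$ for every unit vector $\mathbf{d}$, so $\psi$ strictly increases along every ray emanating from $\x_E$; consequently $\x^\dagger$ lies on the portion of $\partial\Omega$ visible from $\x_E$ ($E$'s optimal run is a grazing one), and the program reduces to an optimization over those directions. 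Uniqueness on this set I would extract from the first-order (normal-cone) optimality condition at $\x^\dagger$, namely that $\dfrac{\x^\dagger-\x_P}{\|\x^\dagger-\x_P\|}-\dfrac{1}{\gamma}\dfrac{\x^\dagger-\x_E}{\|\x^\dagger-\x_E\|}$ is an outward normal to $\Omega$ at $\x^\dagger$, together with strict convexity/monotonicity estimates; continuity of $\x^\dagger(\cdot)$ on $\cR_a$ then follows from Berge's maximum theorem ($\psi$ is coercive on $\Omega$, again since $\gamma<1$). Given uniqueness and continuity, Danskin's theorem~\cite{bertsekas2003convex} applied to $V_a(\mbx)=\min_{\z\in\Omega}\psi(\z;\mbx)$ yields $\partial V_a/\partial\mbx=\partial_{\mbx}\psi(\z;\mbx)\big|_{\z=\x^\dagger}$, a continuous map whose $P$- and $E$-blocks are collinear with $\bm{\theta}_P^\star$ and $\bm{\theta}_E^\star$; hence $V_a\in C^1(\cR_a)$.

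The HJI verification is then short. Since $V_a$ is time-invariant, $\partial V_a/\partial t=0$, so~\eqref{eq:hjipde} reduces to $(\partial V_a/\partial\mbx)\,\mbf(\mbx,\bm{\theta}_P^\star,\bm{\theta}_E^\star)=0$. Substituting $\mbf=(v_P\bm{\theta}_P^\star,v_E\bm{\theta}_E^\star)$ and the Danskin gradient, the $P$-contribution is $\pm v_P\|\bm{\theta}_P^\star\|^2$ and the $E$-contribution is $\mp(v_E/\gamma)\|\bm{\theta}_E^\star\|^2$, and these sum to zero because $\|\bm{\theta}_P^\star\|=\|\bm{\theta}_E^\star\|=1$ and $v_E/\gamma=v_P$. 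It remains to note that $\bm{\theta}_P^\star$ and $\bm{\theta}_E^\star$ realize, respectively, the minimum and the maximum of the input-separable Hamiltonian over $\cU\times\cU$ --- each being aligned, up to sign, with the corresponding block of $\partial V_a/\partial\mbx$, Isaacs' condition being supplied by Proposition~\ref{prop:hamiltonian} --- and that the trajectory generated from any $\mbx\in\cR_a$ by $(\bm{\theta}_P^\star,\bm{\theta}_E^\star)$ drives both players straight to $\x^\dagger$, with $E$ (which beats $P$ to $\x^\dagger$ since $\x^\dagger\in\cA(\mbx;\gamma)$) reaching $\partial\Omega$ first and terminal separation equal to $V_a(\mbx)$. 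The differential-game verification theorem~\cite{bacsar1998dynamic} then certifies that $V_a$ is the value of the Attack Game of Degree and that $(\bm{\theta}_P^\star,\bm{\theta}_E^\star)$ is its unique saddle point on $\cR_a$ (unique because $V_a\in C^1$, so no dispersal surface arises). I expect the uniqueness/$C^1$ step --- in particular ruling out bifurcation of $\x^\dagger$ at configurations where $\partial\Omega$ fails to be strictly convex --- to be the main obstacle; every step downstream parallels the proof of Theorem~\ref{theo:capgamedegree} almost verbatim.
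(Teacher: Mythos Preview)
Your proposal is correct and follows essentially the same route as the paper: reduce via Proposition~\ref{prop:hamiltonian} to the finite-dimensional program~\eqref{eq:attackgame_optattpoint}, differentiate $V_a$ through the minimizer (Danskin), and verify~\eqref{eq:hjipde} by direct substitution. If anything you are more careful than the paper on two points the paper glosses over---the $\mbx$-dependence of the feasible set in~\eqref{eq:attackgame_optattpoint} (which you remove by showing the $\mathrm{cl}(\cA)$ constraint is inactive) and the uniqueness of $\x^\dagger$ (which the paper asserts from convexity of the feasible set alone, despite the objective being only DC)---so your flagging of the uniqueness/$C^1$ step as the main obstacle is apt.
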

\begin{proof}
    In view of Proposition~\ref{prop:hamiltonian}, Eq.~\eqref{eq:valfun2} becomes
    \begin{align} \label{eq:Vader}
        &V_a(\mbx) = \min\nolimits_{\bm{\theta}_P^\star(\cdot)}\max\nolimits_{\bm{\theta}_E^\star(\cdot)} \| \x_E(t_f) - \x_P(t_f) \| \nonumber
        \\
        &= \min\nolimits_{\z \in \mathrm{cl}(\cA(\mbx;\gamma)) \cap \Omega}~ -\| \z - \x_P \| + \| \z - \x_E \| / \gamma.
    \end{align}
    Since $\mathrm{cl}(\cA(\mbx;\gamma)) \cap \Omega$ is the non-empty intersection of two convex and closed sets, there exists a unique $\x^\dagger$ that minimizes the function $\varphi(\mbx, \z):=-\| \z - \x_P \| + \| \z - \x_E \| / \gamma$ in $\mathrm{cl}(\cA(\mbx;\gamma)) \cap \Omega$ for any given $\mbx \in \cR_a$. Additionally, the function $\mbx \mapsto \varphi(\mbx,\z)$ is continuous and convex on $\cR_a$ for any given $\z \in \mathrm{cl}(\cA(\mbx;\gamma)) \cap \Omega$. Hence,
    \begin{align}
        V_a(\mbx) = -\| \x^\dagger - \x_P \| + \| \x^\dagger - \x_E \| / \gamma.
    \end{align}
    The derivative of $V_a$ is given by
    \begin{align} \label{eq:grad_Va}
        \frac{\partial V_a(\mbx)}{\partial \mbx} &=
        \begin{bmatrix}
            -(\x^\dagger - \x_P)/\| \x^\dagger - \x_P \|
            \\
            \x^\dagger - \x_E/(\gamma\| \x^\dagger - \x_E \|)
        \end{bmatrix}^\top,
    \end{align}
    where $(\x^\dagger \neq \x_P) \wedge (\x^\dagger \neq \x_E)$ in $\mbx \in \cR_a$. Since the partial derivatives of $V_a$ exist and are continuous over $\cR_a$, $V_a$ is $C^1$ on the same set. Finally, we again have $\partial V_a/\partial t = 0$, and substituting Eq.~\eqref{eq:grad_Va} and Eq.~\eqref{eq:attackgame_optinputs} into Eq.~\eqref{eq:hjipde} we obtain
    \begin{align}
        &(\partial V_a / \partial \mbx)  \mbf(\mbx,\bm{\theta}_P^\star,\bm{\theta}_E^\star) = (\partial V_a / \partial \mbx) v_P \left[\bm{\theta}_P^{\star \top} ~ \gamma \bm{\theta}_E^{\star \top} \right]^\top \nonumber
        \\
        &\quad = v_P
        \begin{bmatrix}
            -\frac{\x^\dagger - \x_P}{\| \x^\dagger - \x_P \|}
            &
            \frac{\x^\dagger - \x_E}{\gamma \| \x^\dagger - \x_E \|}
        \end{bmatrix}
        \begin{bmatrix}
            \frac{\x^\dagger - \x_P}{\| \x^\dagger - \x_P \|}
            \\
            \frac{\gamma(\x^\dagger - \x_E)}{\| \x^\dagger - \x_E \|}
        \end{bmatrix} = 0.
    \end{align}
    This completes the proof.
\end{proof}

\begin{remark}
    Since $V_a$ is $C^1$ on $\cR_a$, there exists no dispersal surface in $\cR_a$, and thus the optimal state feedback strategies $\bm{\theta}_P^\star(\cdot)$ and $\bm{\theta}_E^\star(\cdot)$ correspond to the unique saddle point of $J_a$. The closed-form expressions of these strategies can be found if TS has a simple shape, otherwise they can be computed numerically.
    \label{remark:attgame_semiperm}
\end{remark}

\section{Numerical Simulations} \label{sec:simulations}

\begin{figure}
    \centering
    \vspace{3mm}
    \includegraphics[scale=0.28]{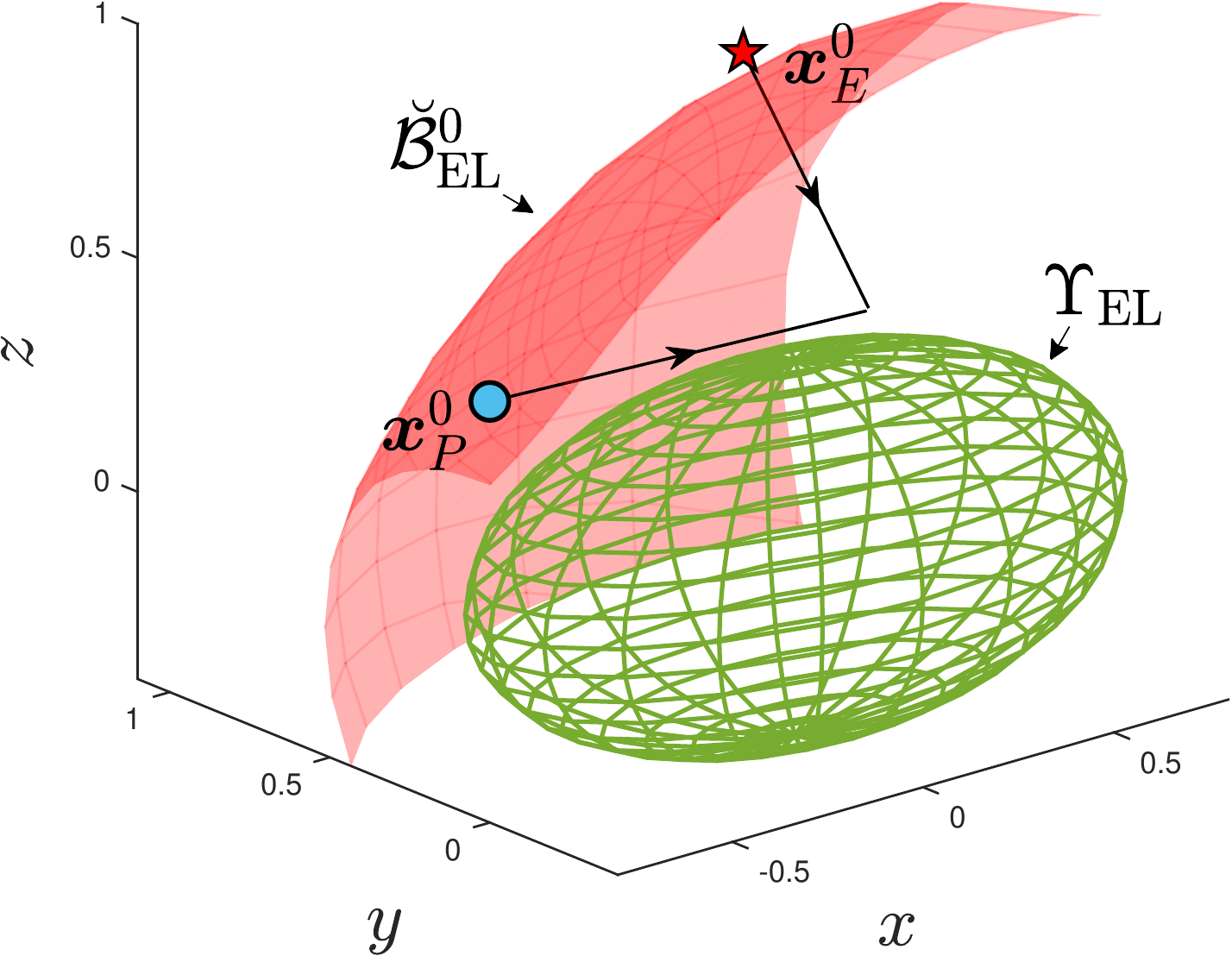}
    \includegraphics[scale=0.28]{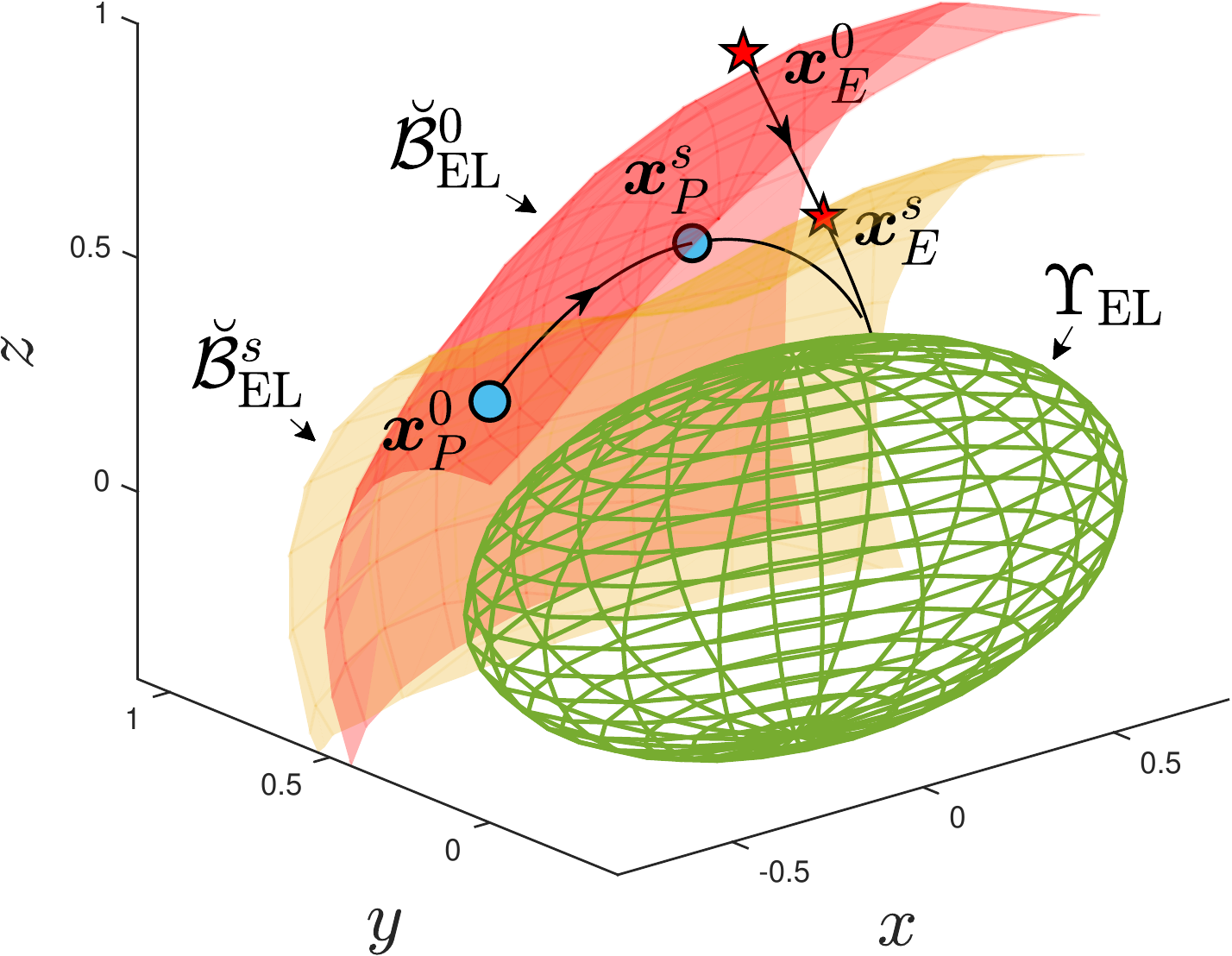}
    \includegraphics[scale=0.22]{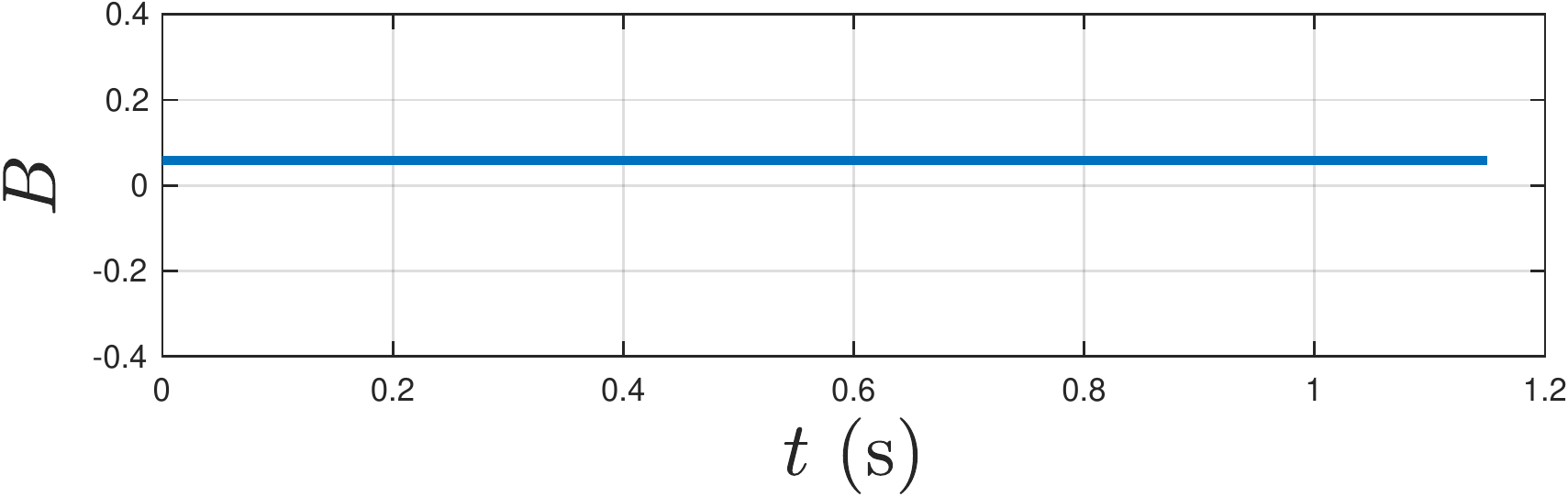}
    \includegraphics[scale=0.22]{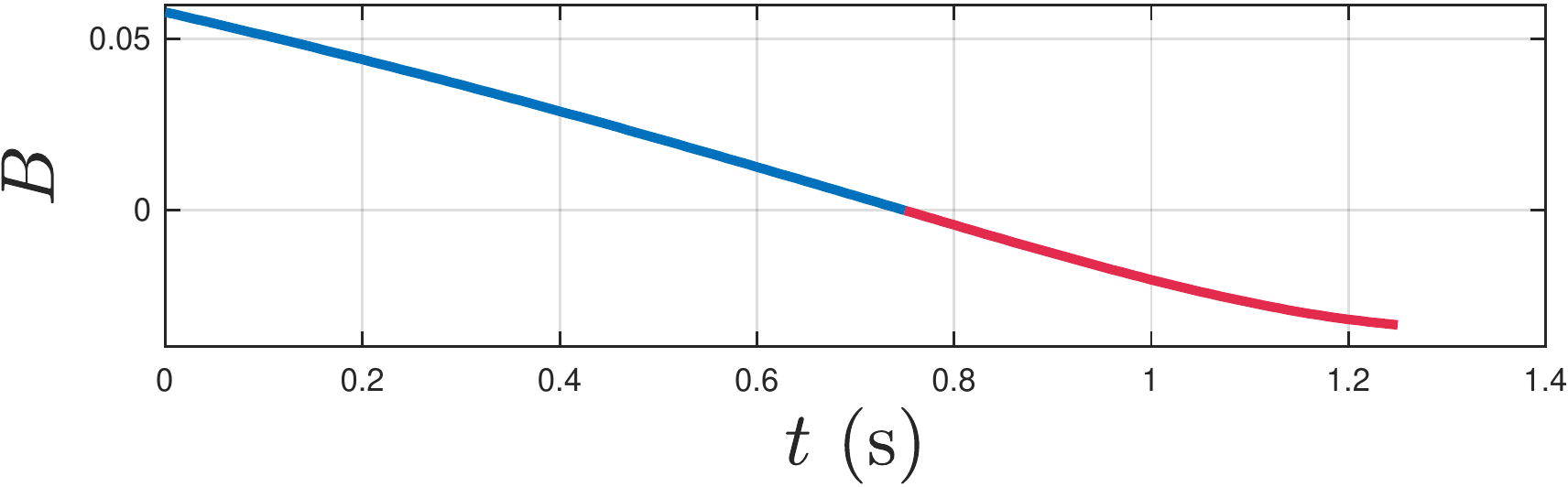}
    \caption{(Left) trajectories under optimal play and value of the barrier function, (right) trajectories of players under $P$'s non-optimal play and value of the barrier function.} 
    \label{fig:simCG}
\end{figure}

In this section, we present numerical simulations for four different scenarios of the TPTDG. In Figures~\ref{fig:simCG} and \ref{fig:simAG}, TS (green ellipsoid) corresponds to the (compact) set $\Upsilon_\mathrm{EL} := \{\z \in \bR^3 : x^2/0.8^2 + y^2/0.4^2 + z^2/0.4^2 \leq 1\}$. The initial position of $P$ (blue circle) is $\x_P^0 = (-0.8,0,0.5)$, whereas the initial positions of $E$ (red star) are $\x_E^0 = (0.2,0.4,0.9)$ in Figure~\ref{fig:simCG} and $\x_E^0 = (0.2,0.2,0.7)$ in Figure~\ref{fig:simAG}, respectively. The speed ratio is always $\gamma = 0.5$. The PBS $\breve \cB_\mathrm{EL}^0$ (red surface) and $\breve \cB_\mathrm{EL}^s$ (yellow surface) are the projected images of the 6-th dimensional barrier surface $\cB$ onto the game space $\bR^3$ with respect to $\x_P^0$ and $\x_P^s := \x_P(t_s)$, respectively, where $t_s$ is the switching time at which the game state $\mbx$ crosses $\cB$. PBS is constructed via Eq.~\eqref{eq:barriermap} as the ellipsoidal TS admits no closed-form projection operator. All the control strategies are computed numerically at every time instant.

In Figure~\ref{fig:simCG}, the players begin by playing the capture game since $\x_E^0$ lies outside $\breve \cB_\mathrm{EL}^0$ (i.e., $\mbx_0 \in \cR_c$). On the left, both players play optimally (i.e., employ strategies given in \eqref{eq:capturegame_optinputs}), whereas on the right $P$ employs the (suboptimal) pure-pursuit strategy~\cite{bakolas2012relay}. On the left, the trajectories of both players are straight lines, $E$ is eventually captured by $P$, and the value of the barrier function $B$ stays above zero during the game. Conversely, on the right, $E$ crosses the barrier at $t_s$ (when $B=0$), switches her strategy to Eq.~\eqref{eq:attackgame_optinputs}, and successfully attacks TS.

In Figure~\ref{fig:simAG}, $E$ is initially deployed in $\breve \cB_\mathrm{EL}^0$ (i.e., $\mbx_0 \in \cR_a$), so the players begin by playing the attack game. On the left, both players play optimally (i.e., strategies given in \eqref{eq:attackgame_optinputs}), whereas on the right $E$ directly heads towards an arbitrary point on TS (suboptimal strategy). On the left, $P$, knowing that capture is impossible, strives to minimize his distance from $E$ who eventually enters TS, resulting in straight-line trajectories. On the right, $E$ crosses $\cB$ at $t_s$ (when $B= 0$) after which $P$ switches his strategy to Eq.~\eqref{eq:capturegame_optinputs}. Finally, $P$ captures $E$ outside TS.

\begin{figure}
    \centering
    \vspace{3mm}
    \includegraphics[scale=0.28]{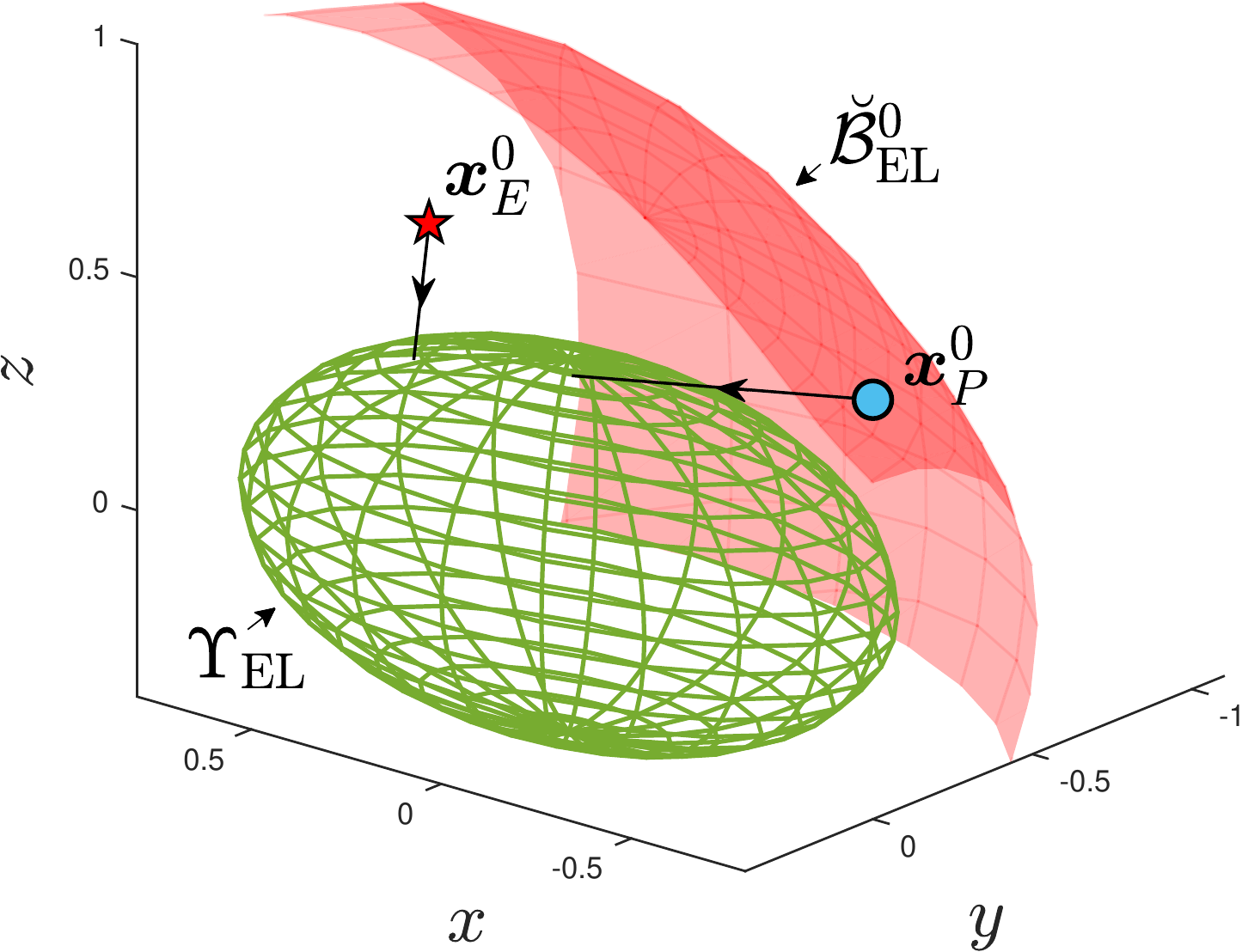}
    \includegraphics[scale=0.28]{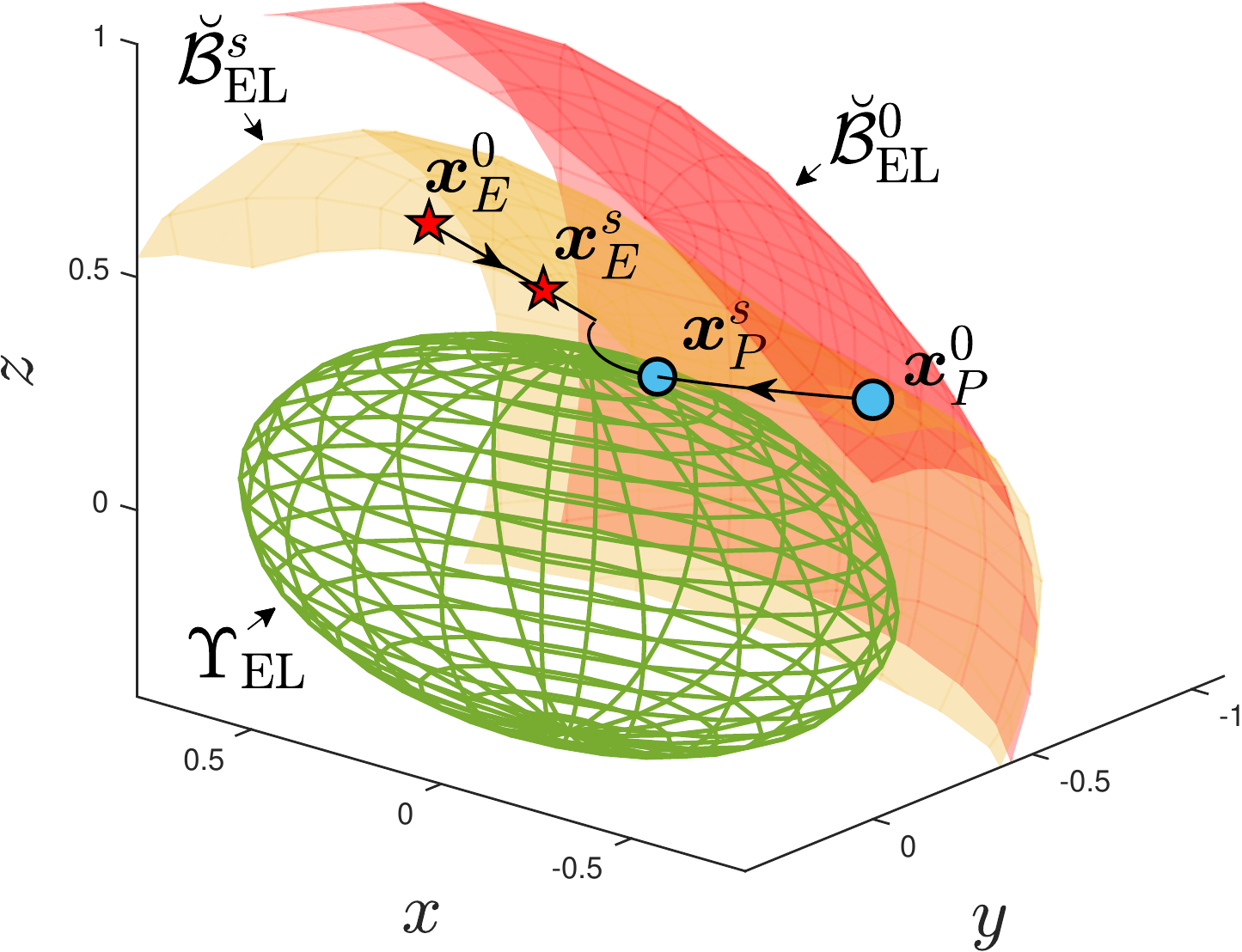}
    \includegraphics[scale=0.22]{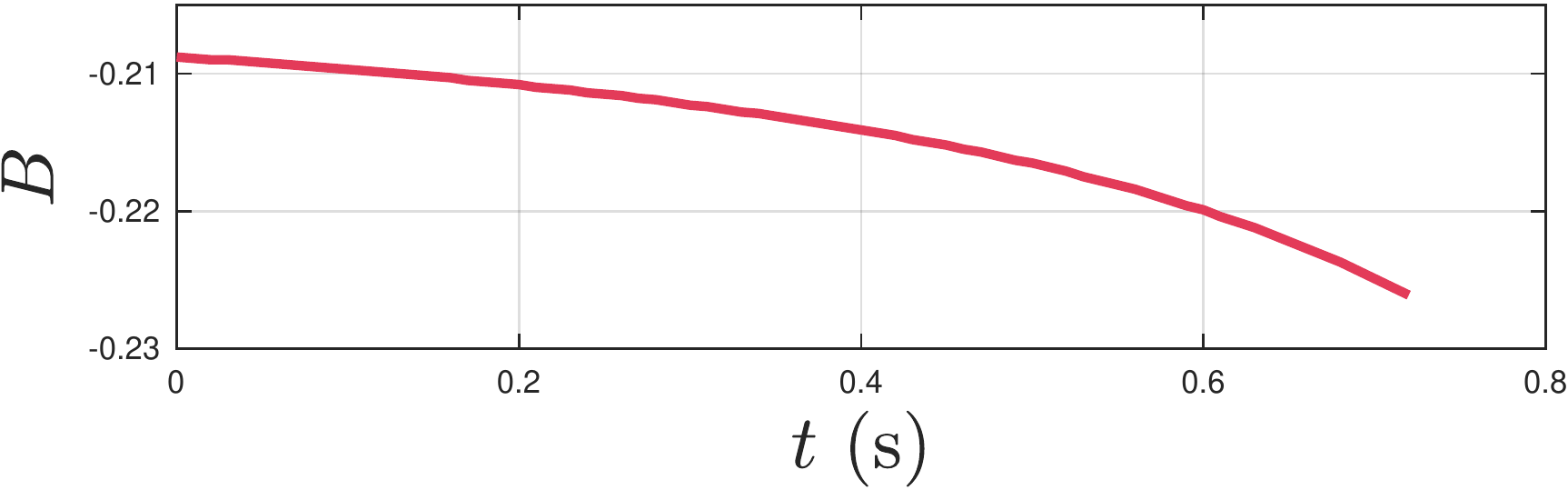}
    \includegraphics[scale=0.22]{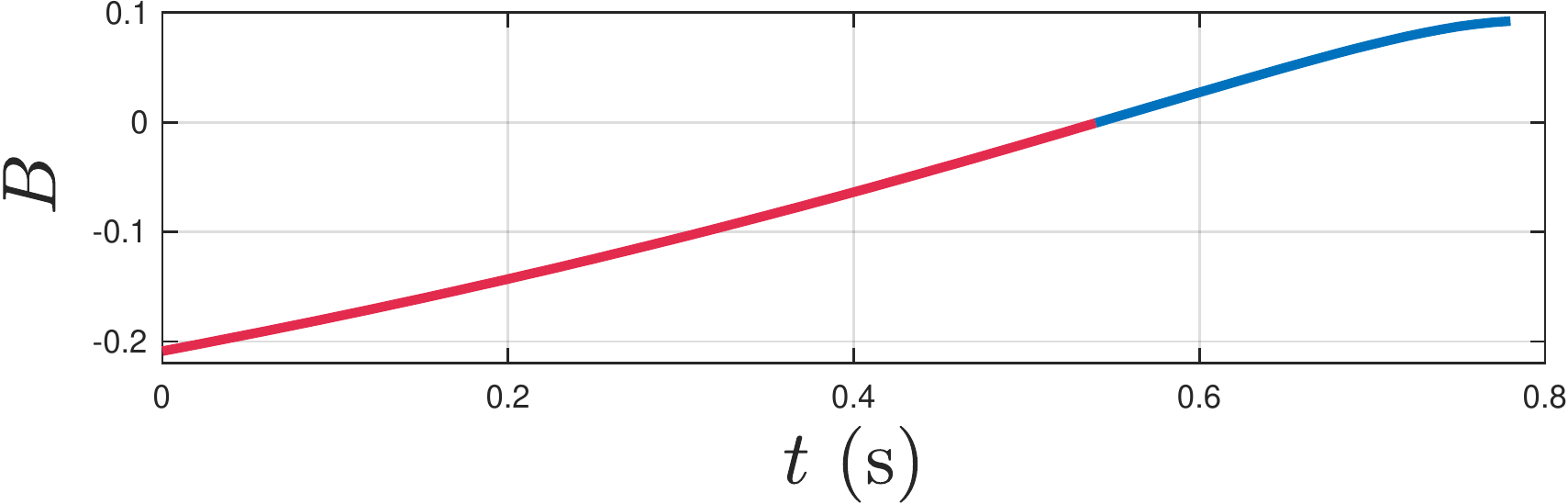}
    \caption{(Left) trajectories under optimal play and value of the barrier function, (right) trajectories under $E$'s non-optimal play and value of the barrier function.}
    \label{fig:simAG}
\end{figure}

\section{Conclusion} \label{sec:conclusion}
In this paper, we have analyzed the problem of guarding a closed and convex target set from a single attacker in the $n$-th dimensional Euclidean space based on differential game theory. Our solution can be applicable to not only cases studied in previous related work but also more advanced cases where no analytical form of the barrier or the optimal strategies may exist. Numerical simulations results have been presented to verify the efficacy of our solution in such cases. In our future work, we will extend the proposed solution approach to multi-player target defense games.

\bibliographystyle{ieeetr}
\bibliography{pegref}

\end{document}